\documentclass[aos,preprint,authoryear]{imsart}

\RequirePackage[OT1]{fontenc}
\RequirePackage{amsthm,amsmath,amssymb}
\RequirePackage{natbib}
\RequirePackage[colorlinks,citecolor=blue,urlcolor=blue]{hyperref}

\usepackage{graphicx}
\usepackage{xcolor}
\usepackage{subfig}
\usepackage{xcolor}
\usepackage{subfig}
\usepackage{comment}
\usepackage{pdfpages}

\arxiv{arXiv:1502.02069}
%%%%%%%%%%%%%%%%%%%%%%%%%%%%%% Textclass specific LaTeX commands.
\startlocaldefs

\def\E{\mathbb{E}}
\def\I{\mathbb{I}}
\def\P{\mathbb{P}}
\def\R{\mathbb{R}}
\def\S{\mathcal{S}}

\def\sV{\mathcal{V}}
\def\sN{\mathcal{N}}

\def\1{\boldsymbol{1}}

\numberwithin{equation}{section}
\theoremstyle{plain}
\newtheorem{theorem}{Theorem}[section]

\newtheorem{assumption}[theorem]{Assumption}
\newtheorem{lemma}[theorem]{Lemma}
\newtheorem{corollary}[theorem]{Corollary}

\theoremstyle{remark}
\newtheorem{remark}[theorem]{Remark}

\endlocaldefs

%\numberwithin{equation}{section}
%\numberwithin{figure}{section}
%\theoremstyle{plain}
%\newtheorem{definition}[theorem]{Definition}

%

\allowdisplaybreaks

\begin{document}

\begin{frontmatter}
\title{Likelihood-based model selection for stochastic block models}
\runtitle{Likelihood-based model selection for SBMs}
%\thankstext{T1}{Supported in part by NSF FRG Grant DMS-1160319.}

\begin{aug}
\author{\fnms{Y.X. Rachel} \snm{Wang}\thanksref{m1}\ead[label=e1]{ryxwang@stanford.edu}}
\and
\author{\fnms{Peter J.} \snm{Bickel}\thanksref{m2}\ead[label=e2]{bickel@stat.berkeley.edu}}
%\and
%\author{\fnms{Third} \snm{Author}\thanksref{t1,m2}
%\ead[label=e3]{third@somewhere.com}
%\ead[label=u1,url]{http://www.foo.com}}

%\thankstext{t1}{Some comment}
%\thankstext{t2}{First supporter of the project}
%\thankstext{t3}{Second supporter of the project}
\runauthor{Y.X.R. Wang and P.J. Bickel}

\affiliation{Department of Statistics, Stanford University\thanksmark{m1} \\ Department of Statistics, University of California, Berkeley\thanksmark{m2}}

\address{Y.X.R. Wang\\
Department of Statistics\\
Sequoia Hall\\
390 Serra Mall\\
Stanford University\\
Stanford, CA 94305\\
\printead{e1}
}

\address{P.J. Bickel\\
Department of Statistics\\
University of California, Berkeley\\
367 Evans Hall\\
Berkeley, California, 94720\\
\printead{e2}
}

%\address{Address of the Third author\\
%Usually a few lines long\\
%Usually a few lines long\\
%\printead{e3}\\
%\printead{u1}}
\end{aug}

\begin{abstract}
The stochastic block model (SBM) provides a popular framework for modeling community structures in networks. However, more attention has been devoted to problems concerning estimating the latent node labels and the model parameters than the issue of choosing the number of blocks. We consider an approach based on the log likelihood ratio statistic and analyze its asymptotic properties under model misspecification. We show the limiting distribution of the statistic in the case of underfitting is normal and obtain its convergence rate in the case of overfitting. These conclusions remain valid when the average degree grows at a polylog rate. The results enable us to derive the correct order of the penalty term for model complexity and arrive at a likelihood-based model selection criterion that is asymptotically consistent. Our analysis can also be extended to a degree-corrected block model (DCSBM). In practice, the likelihood function can be estimated using more computationally efficient variational methods or consistent label estimation algorithms, allowing the criterion to be applied to large networks.
\end{abstract}

\begin{keyword}[class=MSC]
\kwd[Primary ]{60K35}
\kwd{60K35}
\kwd[; secondary ]{60K35}
\end{keyword}

\begin{keyword}
\kwd{stochastic block models}
\kwd{model misspecification}
\kwd{network communities}
\kwd{likelihood ratio statistic}
\end{keyword}

\end{frontmatter}

\section{Introduction}
Network modeling has attracted increasing research attention in the past few decades as the amount of data on complex systems accumulates at an unprecedented rate. Many complex systems in science and nature consist of interacting individual components which can be represented as nodes with connecting edges in a network. Network
modeling has found numerous applications in studying friendship networks in sociology, Internet traffic in information
technology, predator-prey interactions in ecology, and protein-protein interactions and gene regulatory mechanisms in molecular biology.

One prominent feature of many of these networks is the presence of communities, where groups of nodes exhibit high internal connectivity. Communities provide a natural division of the network into subunits with certain traits. In social networks, they often arise based on people's common interests and geographic locations. The World Wide Web forms communities or hubs based on the content of the web pages. In gene networks, communities correspond to genes with related functional groupings, many of which can act in the same biological pathway. Numerous heuristic algorithms have been proposed for detecting communities. However, a generative model is needed to study the problem from a theoretical perspective. 

The stochastic block model (SBM), proposed by \citet{Holland:1983} in social science, is one of the simplest random graph models incorporating community structures. It assigns each node a latent discrete block variable and the connectivity levels between nodes are determined by their block memberships. In practice, this model sometimes oversimplifies the structures of real networks and other variants have been proposed, including the degree-corrected SBM (DCSBM) \citep{KarrerNewman:2010} relaxing the within-block degree homogeneity constraint and overlapping SBM \citep{Airoldi:2008} allowing a node to be in multiple blocks. These models have been applied to model real networks in social science and biology \citep{BickelChen:2009, Daudin:2008, Airoldi:2008, KarrerNewman:2010}. 

Much research effort has been devoted to the problems of estimating the latent block memberships and model parameters of a SBM, including modularity \citep{newman:2006} and likelihood maximization \citep{BickelChen:2009, Amini:2013}, variational methods \citep{Daudin:2008, latouche:2012}, spectral clustering \citep{Rohe:2011, fishkind:2013}, belief propagation \citep{decelle:2011} to name but a few. The asymptotic properties of some of these methods have also been studied \citep{BickelChen:2009, Rohe:2011, celisse:2012, bickel:2013}. However, these methods require knowing (or knowing at least a suitable range for) $K$, the number of blocks, a priori. Less attention has been paid to the problem of selecting $K$. 

For general networks this problem corresponds to the issue of determining the number of communities, which remains a challenging open problem. Recursive approaches have been adopted to extract \citep{zhao:2011} or divide \citep{BickelSarkar:2013} one community sequentially, while using optimization strategies or hypothesis testing to decide whether the process should be stopped at one stage. A more general sequential test for comparing a fitted SBM against alternative models with finer structures is proposed in \citet{lei:2014}. Conceptually these approaches are more appealing for networks with a hierarchical structure. In other cases, it would be more desirable to be able to compare different community numbers directly. A few likelihood-based model selection criteria have been proposed \citep{Daudin:2008, latouche:2012, saldana:2014}. From an information-theoretic perspective, \citet{peixoto:2013} proposed a criterion based on minimum length description. These approaches circumvent the difficulty of  analyzing the likelihood directly by using variational approximations or assuming the node labels are fixed and using plug-in estimates obtained from other inference algorithms. Furthermore, the asymptotic studies of these criteria examining their large-sample performance remain incomplete.  Empirically, a network cross-validation method has been investigated in \citet{chen:2014}. More recently, \cite{le:2015} proposed a method based on analyzing the spectral properties of graph operators, including the non-backtracking matrix and the Bethe Hessian matrix. 

In this paper, we directly address the challenges involved in analyzing the asymptotic distribution of the maximum log likelihood function under model misspecification. We show the log likelihood ratio statistic is asymptotically normal in the case of underfitting. Although obtaining an explicit asymptotic distribution of the statistic in the case of overfitting is much more challenging, we have still derived its order of convergence and subsequently shown these two cases of misspecification can be separated with probability tending to one. We thus propose a model selection criterion taking the form of a penalized likelihood and show it is asymptotically consistent in the regime where network average degree grows at a polylog rate. In Section \ref{sec_results}, we first derive our main results under the regular SBM assumptions and then outline how the arguments can be extended to a DCSBM. Computationally the likelihood can be approximated with variational algorithms or consistent label estimation algorithms without affecting the asymptotic consistency of the criterion. We demonstrate the effectiveness of our method by comparing its performance with other model selection approaches on simulated and real networks in Sections \ref{sec_sim} and \ref{sec_real}.

\section{Results}
\label{sec_results}
\subsection{Preliminaries}
A SBM with $K$ blocks on $n$ nodes is defined as follows. A vector of latent labels $Z=(Z_1, \dots, Z_n)$ is generated with $Z_i$ taking integer values from $[K] = \{1, \dots, K\}$ governed by a multinomial distribution with parameters $\pi=(\pi_{1},\pi_{2},\ldots,\pi_{K})$. Given $Z_{i}=a,$ $Z_{j}=b$, an adjacency matrix $A$ is generated with 
\[
A_{i,j}|(Z_{i}=a,Z_{j}=b)\sim\text{Bernoulli}(H_{a,b}),\quad\quad i\neq j.
\]
We consider a symmetric $A$ with zero diagonal entries corresponding to an undirected graph, although our arguments generalize easily to directed graphs. $H$ is a $K\times K$ symmetric matrix describing the connectivities within and between blocks. We denote the model parameters $\theta=(\pi, H)$ and let $\Theta_{K}$ be the parameter space of a $K$-block model,
\begin{align*}
\Theta_{K}= & \{ \theta \mid \pi \in (0,1)^K, \sum_{a=1}^{K} \pi_a=1, H\in(0,1)^{K\times K} \}. 
\label{eq:Theta_K}
\end{align*} 

Throughout the paper, $\theta^*=(\pi^*, H^*)$ will denote the true generative parameter giving rise to an observed $A$. We will further parametrize $H^*$ by $H^*=\rho_n S^*$, where the degree density $\rho_n$ may be $\Omega(1)$ or going to zero at a rate $n\rho_n/\log n \to \infty$. We assume $\theta^*\in\Theta_K$ and $H^*$ has no identical columns, meaning the underlying model has $K$ blocks and it is identifiable in the sense that it cannot be further collapsed to a smaller model. $z=(z_1, \dots, z_n)\in [K']^n$ represents another set of labels under a $K'$-block model with $K'$ not necessarily equaling $K$. $g(A; \theta)$ is the likelihood function describing the distribution of $A$ with parameter $\theta\in\Theta_{K'}$ and can be written as the sum of the complete likelihood function $f(z, A; \theta)$ associated with the labels $z\in [K']^n$:
\begin{equation}
g(A; \theta)  = \sum_{z\in [K']^n} f(z, A; \theta),
\label{eq_g_sum}
\end{equation}
where $f(z, A; \theta)$ takes the form
\begin{align*}
f(z, A; \theta) %& = \left( \prod_{i=1}^{n} \pi_{z_i} \right) \left( \prod_{i<j} H_{z_i, z_j}^{A_{ij}} (1 - H_{z_i, z_j})^{1-A_{ij}}\right) 	\\
 & =  \left( \prod_{a=1}^{K'} \pi_a^{n_a(z)} \right) \left( \prod_{a=1}^{K'} \prod_{b=1}^{K'} H_{a,b}^{O_{a,b}(z)}(1-H_{a,b})^{n_{a,b}(z) - O_{a,b}(z)}   \right)^{1/2}
\end{align*}
with count statistics
\begin{align*}
n_a(z) & = \sum_{i=1}^{n}\I(z_{i}=a), 	\quad n_{a,b}(z)  =  \sum_{i=1}^{n}\sum_{j\neq i}\I(z_{i}=a,z_{j}=b)	\\
O_{a,b}(z) & =  \sum_{i=1}^{n}\sum_{j\neq i}\I(z_{i}=a,z_{j}=b)A_{i,j}.	\\
\end{align*}
$g$ and $f$ are invariant with respect to a permutation on the block labels, $\tau: [K'] \to [K']$, and its corresponding permutations on the node labels $z$ and the parameters $\theta$. Furthermore, let $R(z)$ be the $K' \times K$ confusion matrix whose $(k,a)$-th entry is 
\begin{equation}
R_{k,a}(z, Z) = n^{-1}\sum_{i=1}^{n} \I (z_i = k, Z_i = a).
\end{equation}

We take a likelihood-based approach toward model selection and first investigate whether different model choices can be separated using the log likelihood ratio
\begin{equation}
L_{K, K'} = \log \frac{\sup_{\theta\in\Theta_{K'}}g(A;\theta)}{\sup_{\theta\in{\Theta_{K}}}g(A;\theta)}.
\label{eq_llhr}
\end{equation}
Here the comparison is made between the correct $K$-block model and fitting a misspecified $K'$-block model.  

In the following sections, we analyze the asymptotic distribution of $L_{K, K'}$ for $K'\neq K$. The main focus of analysis lies in handling the sum in \eqref{eq_g_sum} which contains an exponential number of terms. It has been shown in \cite{bickel:2013} that when $\theta\in\Theta_K$, $\sup_{\theta\in\Theta_K} g(A; \theta)$ is essentially equivalent to maximizing the complete likelihood corresponding to the correct labels $Z$, $\sup_{\theta\in\Theta_K} f(Z, A; \theta)$. In the next section, we handle the case of underfitting and derive the asymptotic distribution of $L_{K, K'}$.

\subsection{Underfitting}
\label{sec_under}
We start by considering $K'=K-1$. Intuitively, a $(K-1)$-block model can be obtained by merging blocks in a $K$-block model. More specifically, given the correct labels $Z\in[K]^n$ and the corresponding block proportions $p=(p_1, \dots, p_{K})$, $p_a= n_a(Z)/n$, we define a merging operation $U_{a,b} (H^*, p)$ which combines blocks $a$ and $b$ in $H^*$ by taking weighted averages with proportions in $p$. For example, for $H=U_{K-1,K}(H^*, p)$, 
\begin{align}
H_{l,k}  & = H^*_{l,k} \quad \text{for } 1 \leq l,k \leq K-2;	\notag \\ 
H_{l,K-1} & = \frac{p_l p_{K-1} H^*_{l,K-1} + p_l p_K H^*_{l,K}}{p_l p_{K-1}+p_l p_{K}} \quad \text{for }	1\leq l \leq K-2;	\notag \\
H_{K-1,K-1} & = \frac{p_{K-1}^2 H^*_{K-1,K-1} + 2p_{K-1} p_K H^*_{K-1,K} + p_K^2 H^*_{K,K}}{p_{K-1}^2 + 2p_{K-1} p_K + p_K^2}.
\end{align}
A schematic representation of $H$ is given in Figure \ref{fig:merge}.

\begin{figure}[h!]
\begin{centering}
\includegraphics[width=10cm]{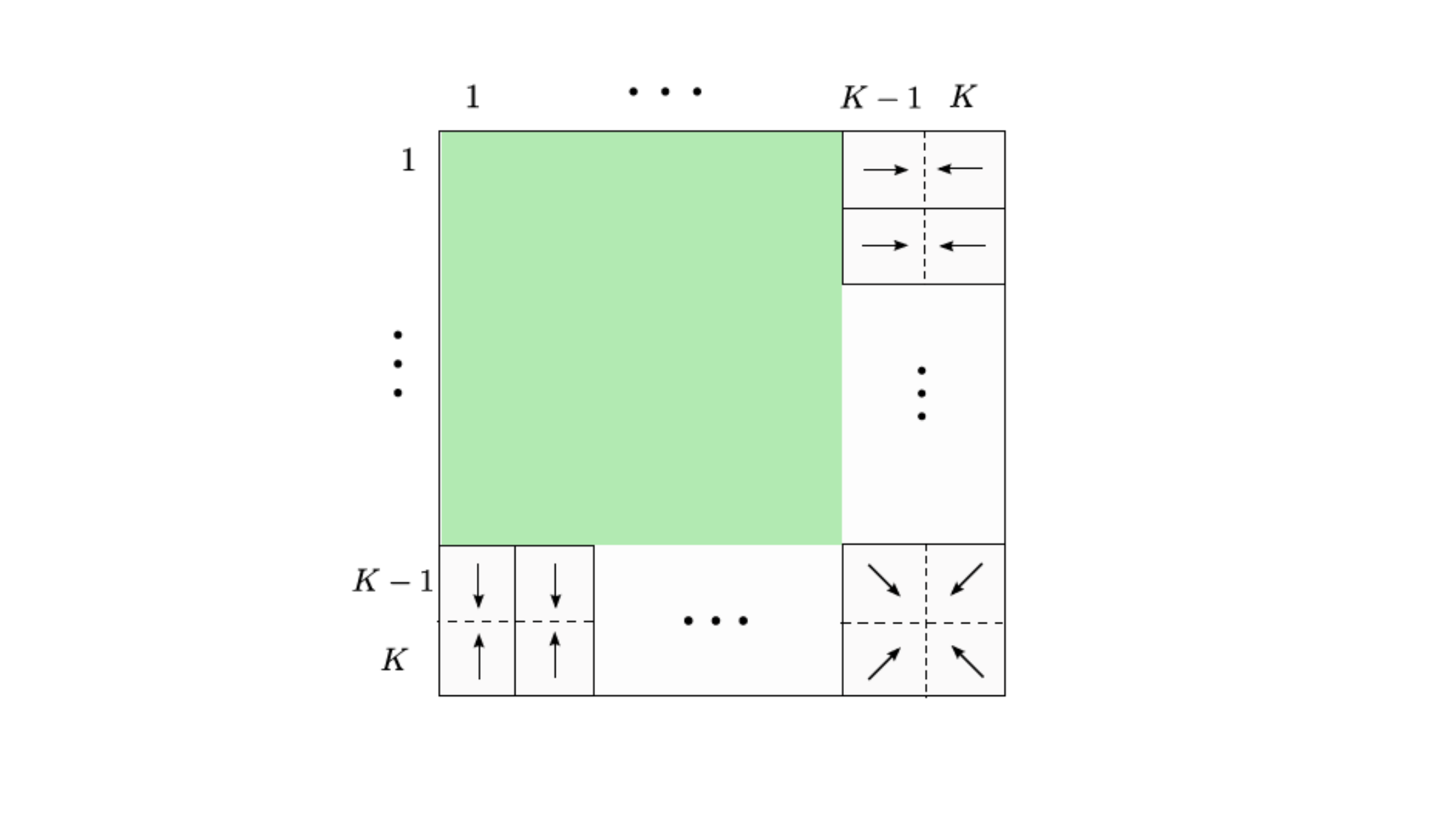}
\caption{A schematic representation of how $H^*$ is merged to give $H=U_{K-1,K}(H^*, p)$. The green area contains unchanged parameters and the arrows indicate where mergings occur.}
\label{fig:merge}
\end{centering}
\end{figure}

For consistency, when merging two blocks $(a,b)$ with $b>a$, the new merged block will be relabeled $a$ and all the blocks $c$ with $c>b$ will be relabeled $c-1$. Using this scheme, we also obtain the merged node labels $U_{a,b}(Z)$ and merged proportions $U_{a,b}(p)$ with $[U_{a,b}(p)]_a = p_a + p_b$. 

Constraining the parameters to a smaller model results in a suboptimal likelihood and its distance from the likelihood associated with the correct model can be measured by the Kullback-Leibler divergence, denoted $D_{KL}(\cdot \Vert \cdot)$. Let
\begin{align*}
\gamma_1(x) & = x\log x + (1-x)\log(1-x),	\\
\gamma_2(x) & = x\log x - x.
\end{align*}
and define \begin{equation}
D_i (a,b) = \sum_{k,l=1}^{K-1}  [U_{a,b}(\pi^*)]_k [U_{a,b}(\pi^*)]_l \gamma_i ([U_{a,b}(H^*, \pi^*)]_{k,l}) 
\label{eq_D}
\end{equation}
When $p=\pi^*$ and treating the labels $Z$ as fixed parameters, denote $P_{A \mid Z, H^*}$ the probability distribution of $A$. Then the information loss incurred by the merging operation $U_{a,b}$ can be measured by 
\begin{align}
 & D_{\text{KL}} \left( P_{A\mid Z, H^*} \Vert P_{A \mid U_{a,b}(Z), U_{a,b}(H^*, \pi^*)}\right)	\\
 = &\begin{cases} \frac{n^2}{2} \left[ \sum_{c,d=1}^{K} \pi_c^*\pi_d^* \gamma_1(H^*_{c,d})  - D_1(a,b) \right]  + O(n),	 \,\, \text{ for } \rho_n=\Omega(1);		\notag\\
  \frac{n^2\rho_n}{2} \left[ \sum_{c,d=1}^{K} \pi_c^*\pi_d^* \gamma_2(H^*_{c,d}) - D_2(a,b) \right] + O(n^2\rho_n^2),  	\,\, \text{ for } \rho_n \to 0. 
 \end{cases}
 \label{eq_kl}
 \end{align}
 Thus an optimal merging minimizing $D_{\text{KL}}$ is essentially equivalent to maximizing $D_i(a,b)$. 
 
We assume the following holds for $\theta^*$:
\begin{assumption}
\label{assump_unique}
A unique maximum exists for $\max_{(a,b)} D_i(a,b)$.
\end{assumption}
This assumption is more of a notational convenience than necessity. From now on without loss of generality assume the maximum is achieved at $a=K-1$ and $b=K$, and denote $H' = U_{K-1, K}(H^*, \pi^*)$, $S' = H' / \rho_n$ and $Z'=U_{K-1, K} (Z)$. We also assume $H'$ is identifiable in the sense that
\begin{assumption}
$S'$ has no identical columns.
\label{assump_iden}
\end{assumption}
Thus the merged model cannot be collapsed further to a smaller model. 

The next lemma argues $\sup_{\theta\in\Theta_{K-1}} g(A; \theta)$ is essentially dominated by the complete likelihood associated with the optimal merging. 

\begin{lemma}
\label{lem_bounds}
Let $\S(z)$ be the set of labels which are equivalent up to a permutation $\tau$, $\S(z) = \{\tau(z) \mid \tau: [K-1] \to [K-1]\}$. 
Then
\begin{equation}
\sum_{z\notin\S(Z')} \sup_{\theta\in\Theta_{K-1}} f(z, A; \theta) =  \sup_{\theta\in\Theta_{K-1}} f(Z', A; \theta) o_P(1).
\label{eq_label_bound}
\end{equation}
\end{lemma}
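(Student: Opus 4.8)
The plan is to partition the non-equivalent label configurations $z \notin \S(Z')$ according to how close $z$ is to $Z'$ (up to permutation), and to bound the contribution of each group separately. Concretely, for a configuration $z \in [K-1]^n$, consider its confusion matrix $R(z, Z)$ with respect to the true labels $Z$; after optimally matching the $K-1$ rows of $z$ against the $K$ true blocks (with two true blocks assigned to the same row, mirroring the merging structure), let $t(z)$ denote the number of nodes that are ``misplaced'' relative to the reference merging $Z' = U_{K-1,K}(Z)$. Configurations equivalent to $Z'$ have $t(z) = 0$; I would show that for $t(z) \geq 1$ the profile likelihood $\sup_{\theta} f(z, A; \theta)$ is exponentially smaller than $\sup_\theta f(Z', A; \theta)$, with a deficit of order $\exp(-c\, t(z)\, n \rho_n)$ for some $c > 0$ depending on $S^*$, and then sum over all $z$ using that there are at most $\binom{n}{t}(K-1)^t \le (Kn)^t$ configurations with a given value of $t$.

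The key computational step is the deficit estimate for a single $z$. For fixed labels, $\sup_\theta f(z, A; \theta)$ has the closed form obtained by plugging in the empirical block connectivities $\hat H_{a,b}(z) = O_{a,b}(z)/n_{a,b}(z)$ and empirical proportions $n_a(z)/n$, so $\log \sup_\theta f(z,A;\theta) = -\tfrac12 \sum_{a,b} n_{a,b}(z)\, \gamma_1(\hat H_{a,b}(z)) + \sum_a n_a(z)\log(n_a(z)/n)$. The first-order term here is, up to $O_P$ fluctuations, the population quantity $-\tfrac{n^2}{2}\sum_{k,l} [R^\top(z,Z)\,\mathrm{diag}(\pi^*)\,\ldots]$ evaluated at the block-averaged connectivities induced by $z$; by the same argument that produced \eqref{eq_kl}, any configuration whose induced partition differs from the optimal merging on a positive fraction, or even on $t$ nodes, pays an extra Kullback--Leibler price. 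Using Assumption \ref{assump_unique} (uniqueness of the optimal merging) and Assumption \ref{assump_iden} (so that $S'$ has distinct columns, ruling out degenerate re-mergings), this price is bounded below by $c\, t(z)\, n\rho_n$ for configurations with small $t$, while for configurations far from $Z'$ it is bounded below by $c\, n^2 \rho_n$. One then needs concentration of $O_{a,b}(z)$ and $n_{a,b}(z)$ around their means, \emph{uniformly} over all $z \in [K-1]^n$; this follows from Bernstein/Chernoff bounds for sums of independent Bernoulli entries of $A$ together with a union bound over the $(K-1)^n$ configurations, which is where the condition $n\rho_n / \log n \to \infty$ is used to absorb the entropy $n \log(K-1)$ of the configuration space.

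Combining the pieces, the left-hand side of \eqref{eq_label_bound} divided by $\sup_\theta f(Z',A;\theta)$ is bounded, on an event of probability $1 - o(1)$, by
\[
\sum_{t \geq 1} (Kn)^t \exp(-c\, t\, n\rho_n) \;+\; 2^n \exp(-c\, n^2 \rho_n),
\]
and both terms are $o(1)$ since $n\rho_n \to \infty$ faster than $\log n$ (the first sum is a geometric series with ratio $Kn\, e^{-cn\rho_n} \to 0$, and the second term has exponent dominated by $-c n^2\rho_n + n\log 2 \to -\infty$). On the complementary event the ratio is trivially bounded by a crude deterministic bound (e.g., the number of terms times $1$) times the probability of that event, which is also $o(1)$; taking expectations or arguing directly on events gives the $o_P(1)$ conclusion.

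The main obstacle I anticipate is the uniform concentration step: one must control $\sup_\theta f(z,A;\theta)$ simultaneously over exponentially many $z$, including configurations where some $n_{a,b}(z)$ is small (so $\hat H_{a,b}(z)$ is a poorly-concentrated ratio), and show that such ``thin'' configurations cannot be near-optimal. Handling the $\gamma_1$ versus $\gamma_2$ dichotomy in the two regimes $\rho_n = \Omega(1)$ and $\rho_n \to 0$ adds bookkeeping, as does the need to translate ``distinct columns of $S'$'' into a quantitative separation constant $c$; but the structure of the argument is the same in both regimes, and the essential content is the single-configuration deficit bound plus the entropy-versus-deficit comparison.
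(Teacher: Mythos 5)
Your proposal follows essentially the same route as the paper's proof: split the configurations into those far from the optimal merging (global deficit of order $n^2\rho_n$ beating the $(K-1)^n$ entropy) and those within Hamming distance $m$ of $Z'$ (per-node deficit of order $n\rho_n$ per misplaced node beating the $n^m(K-1)^m$ entropy), with uniform concentration of $O(z)$ over all labelings handled by a union bound absorbed by $n\rho_n/\log n\to\infty$. The per-node KL price you assert is exactly what the paper supplies via its convexity/vertex lemma (the strictly negative directional derivative of $G_i$ at the unique optimal merging under Assumptions \ref{assump_unique} and \ref{assump_iden}), so the argument is sound and matches the paper's.
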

The proof is shown in the Appendix. 

This lemma provides a tractable bound on $\sup_{\theta\in\Theta_{K-1}} g(A; \theta)$, allowing the rest of the analysis to be carried out by usual Taylor expansion. 

Define 
\begin{align}
\mu_1(\theta^*) & =  \frac{1}{2} \left[ D_1 (K-1, K) -\sum_{c,d=1}^{K} \pi_c^*\pi_d^* \gamma_1(H^*_{c,d}) 	\right]	\notag\\
 \mu_2 (\theta^*) & = \mu_1(\theta^*) + \frac{1}{n} \left\{ (\pi^*_{K-1} + \pi^*_{K}) \log(\pi^*_{K-1} + \pi^*_{K}) - \pi^*_{K-1} \log  \notag\pi^*_{K-1} -  \pi^*_{K} \log \pi^*_{K} \right\}
\end{align}
Upon merging blocks $K-1$ and $K$, denote $u(a)$ as the new block label of block $a$, and define $d_i(a,b)$ such that 
\begin{align}
d_1(a,b) & =H^*_{a,b}\log\frac{H'_{u(a),u(b)}}{H^*_{a,b}} + (1-H^*_{a,b})\log\frac{1-H'_{u(a),u(b)}}{1-H^*_{a,b}}		\notag\\
d_2(a,b) & = S^*_{a,b}\log\frac{S'_{u(a),u(b)}}{S^*_{a,b}} + (S'_{u(a),u(b)} - S^*_{a,b}).
\label{eq_def_d}
\end{align}
The following theorem gives the asymptotic distribution of $L_{K, K-1}$, the proof of which is shown in the Appendix.
\begin{theorem}
Suppose the underlying model parameter generating $A$ is $\theta^*=(\pi^*, H^*) \in \Theta_K$, then $L_{K,K-1}$ is asymptotically normal with
\begin{align}
n^{-3/2} L_{K,K-1} - \sqrt{n}\mu_1(\theta^*) & \stackrel{D}{\longrightarrow} N(0,\sigma_1^2(\theta^*)), \quad \text{ if } \rho_n = \Omega(1);		\notag\\
\rho_n^{-1}n^{-3/2} L_{K,K-1} - \rho_n^{-1}\sqrt{n}\mu_2 (\theta^*) & \stackrel{D}{\longrightarrow} N(0,\sigma_2^2(\theta^*)), \quad \text{ if } \rho_n \to 0.
\label{eq_asy_gaussian}
\end{align}
when Assumptions \ref{assump_unique} and \ref{assump_iden} hold. Let $\Sigma(\pi^*)$ be the covariance matrix of a $multinomial(\pi^*)$ distribution, that is $\Sigma_{a,a}(\pi^*) = \pi_a^*(1-\pi_a^*)$ and $\Sigma_{a,b} = -\pi_a^*\pi_b^*$ for $a\neq b$. The variance $\sigma_i^2(\theta^*)$ is given by $J^T(\theta^*) \Sigma(\pi^*) J(\theta^*)$ for $i=1, 2$, where $J(\theta^*) = (J_b(\theta^*))_{1\leq b\leq K}$, \[
J_b(\theta^*) = 
\begin{cases}
\pi_{K-1}^*d_i(b, K-1) + \pi_K^*d_i(b, K)	\quad \text{for }	1\leq b\leq K-2	\\
\sum_{a\neq b} \pi_a^*d_i(a,b)  + \pi_b^*d_i(b,b)  \quad \text{for }b=K-1,K.	
\end{cases}
\] 
\label{thm_underfit}
\end{theorem}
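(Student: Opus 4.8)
The plan is to reduce $L_{K,K-1}$ to a difference of two \emph{profile} complete-data log-likelihoods --- one at the true labels $Z$, one at the optimally merged labels $Z'=U_{K-1,K}(Z)$ --- and then to show that, after profiling out the parameters and averaging out $A$ given $Z$, the only randomness surviving at the relevant scale is the multinomial fluctuation $\sqrt n\,(n(Z)/n-\pi^*)$, so the stated CLT becomes a delta-method statement. \textbf{Reduction.} By the result of \cite{bickel:2013}, $\log\sup_{\theta\in\Theta_K}g(A;\theta)=\log\sup_{\theta\in\Theta_K}f(Z,A;\theta)+O_P(1)$; by Lemma~\ref{lem_bounds} together with $|\S(Z')|=(K-1)!$ and the trivial bounds $\sup_{\theta}f(Z',A;\theta)\le\sup_{\theta\in\Theta_{K-1}}g(A;\theta)\le\sum_z\sup_\theta f(z,A;\theta)$, we get $\log\sup_{\theta\in\Theta_{K-1}}g(A;\theta)=\log\sup_\theta f(Z',A;\theta)+O_P(1)$. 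Hence $L_{K,K-1}=\log\sup_\theta f(Z',A;\theta)-\log\sup_\theta f(Z,A;\theta)+O_P(1)$, and since the target scale is $r_n:=n^{3/2}$ (resp.\ $\rho_n n^{3/2}$) this additive $O_P(1)$ is negligible.

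\textbf{Profiling and averaging out $A$.} The inner supremum is explicit: the $\pi$-MLE is the empirical proportion vector and $\hat H_{a,b}(z)=O_{a,b}(z)/n_{a,b}(z)$, so $\log\sup_\theta f(z,A;\theta)=\sum_a n_a(z)\log\frac{n_a(z)}{n}+\frac12\sum_{a,b}n_{a,b}(z)\gamma_1(\hat H_{a,b}(z))$. Since $n_k(Z')/n=[U(\hat\pi)]_k$ exactly (with $\hat\pi=n(Z)/n$), subtracting the $z=Z$ and $z=Z'$ versions gives $L_{K,K-1}=n\psi(\hat\pi)+\frac12\big[\sum_{k,l}n_{k,l}(Z')\gamma_1(\hat H'_{k,l})-\sum_{a,b}n_{a,b}(Z)\gamma_1(\hat H_{a,b})\big]+O_P(1)$, where $\psi(p)=\sum_k[U(p)]_k\log[U(p)]_k-\sum_a p_a\log p_a$ and $\hat H'_{k,l}$ is the $n_{\cdot,\cdot}(Z)$-weighted average of the $\hat H_{c,d}$ over the merged cell. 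Now condition on $Z$ and replace each $O_{a,b}$ by $\bar O_{a,b}=n_{a,b}(Z)H^*_{a,b}$ (so $\hat H\mapsto H^*$ and $\hat H'_{k,l}\mapsto U(H^*,\hat\pi)_{k,l}+O(1/n)$). Because $\hat H_{a,b}$ and $\hat H'_{k,l}$ are stationary points of $O\mapsto\frac12 n_{\cdot,\cdot}\gamma_1(O/n_{\cdot,\cdot})$, the first-order term of the expansion in $\{O_{a,b}-\bar O_{a,b}\}$ collapses to $\frac12\sum_{a,b}[\gamma_1'(H'_{u(a),u(b)})-\gamma_1'(H^*_{a,b})](O_{a,b}-\bar O_{a,b})$; the coefficient $\log\frac{H'/(1-H')}{H^*/(1-H^*)}$ has its $\log\rho_n$ part cancel because merging preserves connectivities of order $\rho_n$, so it is $O(1)$ and this term has conditional variance $O(n^2\rho_n)=o(r_n^2)$ (using $n\rho_n\to\infty$). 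The second- and higher-order terms are $O_P(1)$, since $\gamma_1''(H^*)\asymp\rho_n^{-1}$, $\mathrm{Var}(O_{a,b}\mid Z)\asymp n^2\rho_n$, and $n_{a,b}\asymp n^2$. Hence $L_{K,K-1}=\Lambda(\hat\pi)+o_P(r_n)$, where $\Lambda(p):=n\psi(p)+\frac{n^2}{2}\sum_{a,b}p_ap_b[\gamma_1(U(H^*,p)_{u(a),u(b)})-\gamma_1(H^*_{a,b})]$ is deterministic and smooth near $\pi^*$ (the $O(n)$ correction from $n_{a,b}=n^2p_ap_b-np_a\I(a=b)$ being smooth and $o_P(r_n)$).

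\textbf{Delta method.} Taylor-expand $\Lambda$ about $\pi^*$. The value is $\Lambda(\pi^*)=n\psi(\pi^*)+n^2\mu_1(\theta^*)=n^2\mu_2(\theta^*)$, using $\sum_{k,l}[U(\pi^*)]_k[U(\pi^*)]_l\gamma_1(H'_{k,l})=D_1(K-1,K)$. For the gradient, differentiating the quadratic weights and using the mass-preservation identity $[U(\pi^*)]_k[U(\pi^*)]_l H'_{k,l}=\sum_{c,d\ \text{in cell}}\pi^*_c\pi^*_d H^*_{c,d}$ converts the $\gamma_1(H')$ terms into Bernoulli cross-entropies; moreover for $b\le K-2$ the $p$-dependence of $U(H^*,p)$ disappears (it only involves $p_{K-1},p_K$), and for $b\in\{K-1,K\}$ the extra connectivity-derivative terms recombine, again by mass preservation, into the same shape. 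One finds $\nabla_b\Lambda(\pi^*)=n^2 J_b(\theta^*)+O(n)$ in the dense case, and $\nabla_b\Lambda(\pi^*)=\rho_n n^2 J_b(\theta^*)(1+o(1))+O(n)$ in the sparse case --- the $\log\rho_n$ pieces cancelling by mass preservation, leaving the Poisson-type divergence $d_2$ in place of $d_1$. The Hessian is $O(n^2\rho_n)$ (resp.\ $O(n^2)$), so the quadratic remainder is $O_P(n\rho_n)=o_P(r_n)$ (resp.\ $O_P(n)=o_P(r_n)$). Combining and using the multinomial CLT $\sqrt n(\hat\pi-\pi^*)\stackrel{D}{\longrightarrow}N(0,\Sigma(\pi^*))$ gives $r_n^{-1}(L_{K,K-1}-n^2\mu_2(\theta^*))=J(\theta^*)^T\sqrt n(\hat\pi-\pi^*)+o_P(1)\stackrel{D}{\longrightarrow}N(0,J^T\Sigma(\pi^*)J)$, which is \eqref{eq_asy_gaussian}, noting $\sqrt n\,\mu_2=\sqrt n\,\mu_1+o(1)$ in the dense case.

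\textbf{Main obstacle.} The delicate step is the analysis of the profiled likelihood given $Z$ under sparsity: one must keep the $H$-MLEs on the scale $\Theta(\rho_n)$ bounded away from $0$ and $1$ (so $\gamma_1',\gamma_1''$ are controlled), exploit that merging keeps connectivities of order $\rho_n$ so the otherwise-divergent $\log\rho_n$ factors cancel between the two fitted models --- which is exactly what brings the residual first-order $A$-fluctuation down to $n\sqrt{\rho_n}=o(r_n)$ and what produces $d_2$ rather than $d_1$ in the sparse limit --- and uniformly bound the higher-order remainders of the Bernoulli log-likelihood expansion, all while tracking centering constants precisely enough to land on $\mu_2$. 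The cancellations in the delta-method step (an envelope phenomenon: $H'$ is a Kullback--Leibler projection, so perturbing it to first order does not move the objective) are slick but routine once the fluctuation control is in hand.
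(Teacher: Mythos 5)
Your proposal is correct and follows essentially the same route as the paper's proof: reduce via \cite{bickel:2013} and Lemma \ref{lem_bounds} to the complete likelihoods at $Z'$ and $Z$, show the edge-count fluctuations given $Z$ are negligible at scale $n^{3/2}\rho_n$ because the log-odds coefficient stays $O(1)$ (both $H'$ and $H^*$ being of order $\rho_n$), and then apply the multinomial CLT with the delta method to land on $J^T\Sigma(\pi^*)J$ with centering $\mu_2$. Your packaging of the delta-method step through the deterministic function $\Lambda(\hat\pi)$ and the envelope/mass-preservation identities is just a tidier rewriting of the paper's explicit expansion over the affected index set $\mathcal{I}$, not a different argument.
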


\begin{remark}
(i) A special case occurs for $K=2$, $\pi_1^* = \pi_2^*$, $H^*_{1,1} = H^*_{2,2}$. In this case, $\sigma^2_i(\theta^*) = 0$ and $\rho_n^{-1}n^{-3/2}L_{K,K-1}$ converges to its asymptotic mean. In general, for homogeneous block models with $H^*_{a,a} = h$ and $H^*_{a,b} = g$ for $a\neq b$,  $J(\theta^*)$ simplifies to $J_b(\theta^*) = 0$ for $b\leq K-2$, $J_{K-1}(\theta^*) = \pi^*_{K-1}d_i(K,K) + \pi^*_{K}d_i(K-1,K)$, and $J_{K}(\theta^*) = \pi^*_{K-1}d_i(K-1,K) + \pi^*_{K}d_i(K,K)$.
%\begin{align*}
%\sigma^2_i(\theta^*) = & \pi^*_{K-1}\pi^*_{K}(\pi^*_{K-1} + \pi^*_{K} - 4\pi^*_{K-1}\pi^*_{K})d^2_i(K-1 ,K)	\\
%	& + 2\pi^*_{K-1}\pi^*_{K}[\pi^*_{K-1}(1-2\pi^*_{K-1}) + \pi^*_{K}(1-2\pi^*_{K})] d_i(K-1, K)d_i(K,K%)	\\
%	& + \left[ (\pi^*_{K-1})^3(1-\pi^*_{K-1})+(\pi^*_{K})^3(1-\pi^*_{K}) - 2(\pi^*_{K-1})^2(\pi^*_{K})^2\right] d_i^2(K,K)
%\end{align*}

(ii) In general, underfitting a $K^- < K$ model will lead to the same type of limiting distribution under conditions similar to Assumptions \ref{assump_unique} and \ref{assump_iden}, assuming the uniqueness of the optimal merging scheme and identifiability after merging. That is,
\begin{equation}
\rho_n^{-1}n^{-3/2} L_{K,K^-} - \rho_n^{-1}\sqrt{n}\mu \stackrel{D}{\longrightarrow} N(0, \sigma^2)
\end{equation}
for some mean $\mu \sim C\rho_n$ and variance $\sigma^2$. The proof will be similar but involve more tedious descriptions of how various merges can occur.

(iii) The asymptotic distributions derived under the null distribution of a $K$-block model suggest one might consider performing hypothesis testing directly to compare against an alternative simpler model. However, the asymptotic means depend on the true parameters, and its maximum likelihood estimate converges only at the rate $\sqrt{n}$ \citep{bickel:2013}.

(iv) Without Assumptions \ref{assump_unique} and \ref{assump_iden}, it is easy to show 
\begin{equation}
L_{K,K^-} \leq -\Omega_P(n^2\rho_n),
\label{eq_general_underfit}
\end{equation}
where $\Omega(\cdot)$ denotes asymptotic lower bound, using the method in proving Theorem \ref{thm_overfit}. 
\label{rem_underfit}
\end{remark}
%%%%%%%%%%%%%%%%%%%%%%%%%%%%%%%%%%%%%%%%%%%%%

\subsection{Overfitting} %%
\label{sec_over}
In the case of overfitting a $K^+$-block model with $K^+ > K$, deriving the asymptotic distribution of $L_{K, K^+}$ is much more challenging. Intuitively, embedding a $K$-block model in a larger model can be achieved by appropriately splitting the labels $Z$ and there are an exponential number of possible splits. We first show a result analogous to Lemma \ref{lem_bounds}. However, the number of summands involved in $\sup_{\theta\in\Theta_{K^+}} g(A;\theta)$ remains exponential this time. 

Recall that for $z\in[K^+]^n$, $R(z, Z)$ is the $K^+\times K$ confusion matrix. We first define a subset $\sV_{K^+}\in [K^+]^n$ such that 
\[
\sV_{K^+} = \left\{ z\in [K^+]^n \mid \text{ there is at most one nonzero entry in every row of $R(z, Z)$} \right\}.
\]
$\sV_{K^+}$ is obtained by splitting of $Z$ such that every block in $z$ is always a subset of an existing block in $Z$. The next lemma shows it suffices to consider only the subclass of labels $\sV_{K^+}$ in the sum $g(A;\theta)$, the proof of which is given in the Appendix. 

\begin{lemma}
Suppose $\theta^*\in\Theta_{K}$, then
\[
\sum_{z\in[K^+]^n} \sup_{\theta\in\Theta_{K^+}} f(z, A;\theta) = (1+ o_P(1))\sum_{z\in\sV_{K^+}} \sup_{\theta\in\Theta_{K^+}} f(z,A;\theta).
\]
\label{lem_overfit_split}
\end{lemma}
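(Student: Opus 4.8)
The plan is to show that the total contribution to $\sum_{z\in[K^+]^n}\sup_\theta f(z,A;\theta)$ coming from labels $z\notin\sV_{K^+}$ is negligible relative to the contribution from $\sV_{K^+}$, and in fact relative to a single dominant term in $\sV_{K^+}$ (for instance the one corresponding to a trivial split, which already achieves essentially $\sup_{\theta\in\Theta_K} f(Z,A;\theta)$ and hence, by the result of \citet{bickel:2013} quoted in the excerpt, is of the same order as $\sup_{\theta\in\Theta_{K^+}} g(A;\theta)$ itself). So it suffices to prove
\[
\sum_{z\notin\sV_{K^+}}\sup_{\theta\in\Theta_{K^+}} f(z,A;\theta) = o_P(1)\cdot \sup_{\theta\in\Theta_{K^+}} f(Z,A;\theta).
\]
First I would fix a $z\notin\sV_{K^+}$ and, at the maximizing $\theta$, expand $\log f(z,A;\theta)$ using the profiled form: the MLE of $H_{a,b}$ given $z$ is $\widehat H_{a,b}(z)=O_{a,b}(z)/n_{a,b}(z)$ and of $\pi_a$ is $n_a(z)/n$, so $\log f(z,A;\widehat\theta(z)) = -\frac{1}{2}\sum_{a,b} n_{a,b}(z)\,\gamma_1(\widehat H_{a,b}(z)) + \sum_a n_a(z)\log(n_a(z)/n)$ (with $\gamma_1$ as defined in the paper, extended to the scaling $\gamma_2$ when $\rho_n\to 0$). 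The key quantity is the difference of this profiled log-likelihood from the one at the trivial split. Using the population (conditional-on-$Z$) version, $\E[O_{a,b}(z)\mid Z]$ and $\E[n_{a,b}(z)]$ are expressible through the confusion matrix $R(z,Z)$, and the resulting population profiled log-likelihood is $-\frac{n^2}{2}\sum_{a,b}(R\pi^*)\cdots$ type expressions; the point is that whenever a row of $R(z,Z)$ has two or more nonzero entries — i.e. $z$ pools (parts of) two genuinely different true blocks into one fitted block — strict concavity of $\gamma_1$ (respectively $\gamma_2$) together with Assumption-type identifiability ($H^*$ has no identical columns) forces a strictly negative deficit, of order $n^2\rho_n$ per "bad" row, exactly as in the underfitting bound \eqref{eq_kl}.

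Next I would turn this per-$z$ deficit into a summable bound. Writing $t = t(z)$ for the number of nodes that are "misassigned" in the sense of landing in a fitted block whose majority true label differs, or more precisely for the number of nodes lying in the off-majority entries of $R(z,Z)$, one expects the profiled log-likelihood to drop by at least $c\,t\,n\rho_n$ for some constant $c>0$ once $t\ge 1$ (an argument parallel to the one used for Lemma \ref{lem_bounds} and for Theorem \ref{thm_overfit}, controlling the fluctuation of $O_{a,b}(z)-\E O_{a,b}(z)$ uniformly over $z$ by a union bound / concentration for sums of independent Bernoullis, which is where the condition $n\rho_n/\log n\to\infty$ is used). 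The number of labelings $z\in[K^+]^n$ with exactly $t$ such nodes is at most $\binom{n}{t}(K^+)^t \le (e n K^+/t)^t \le (enK^+)^t$. Hence
\[
\sum_{z\notin\sV_{K^+}}\frac{\sup_\theta f(z,A;\theta)}{\sup_\theta f(Z,A;\theta)}
\;\le\; \sum_{t\ge 1}(enK^+)^t e^{-c\,t\,n\rho_n}
\;=\;\sum_{t\ge 1}\bigl(enK^+ e^{-c n\rho_n}\bigr)^t,
\]
and since $n\rho_n/\log n\to\infty$ we have $enK^+ e^{-cn\rho_n}\to 0$, so the geometric series is $o_P(1)$. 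Combining with the lower bound $\sup_{\theta\in\Theta_{K^+}}g(A;\theta)\ge \sup_{\theta\in\Theta_{K^+}}f(Z,A;\theta)$ on the denominator and with Lemma \ref{lem_bounds}-type control showing the $\sV_{K^+}$ sum is itself $(1+o_P(1))\sup_\theta f(Z,A;\theta)$, this yields the claimed identity.

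The main obstacle I anticipate is the \emph{uniform} control of the deficit over the exponentially many $z\notin\sV_{K^+}$: the naive per-$z$ concentration bound for $O_{a,b}(z)-\E O_{a,b}(z)$ has to beat the $(enK^+)^t$ entropy factor, which is exactly why one needs the deficit to be genuinely linear in $t$ (not merely positive) and the fluctuations to be controlled at scale $o(tn\rho_n)$ simultaneously for all $z$ with $t$ bad nodes. Handling the boundary cases — fitted blocks that become empty or nearly empty, so that $\gamma_1$ or $\gamma_2$ is evaluated near $0$ or $1$ and the "per-row deficit" degenerates — requires a separate, more delicate argument (one typically shows such $z$ are dominated by a coarser $z'$ with those blocks deleted, reducing to a $K'$-block comparison with $K\le K'<K^+$), and bridging the $\rho_n=\Omega(1)$ and $\rho_n\to 0$ regimes forces carrying both $\gamma_1$ and $\gamma_2$ expansions throughout. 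I expect these to be the technically heaviest parts, deferred to the Appendix.
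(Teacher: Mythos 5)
Your overall strategy (a population deficit that is linear in the number of ``bad'' nodes, beaten against an entropy factor using $n\rho_n/\log n\to\infty$) is the same flavor as the paper's argument, but the reduction you make at the outset creates a genuine gap: you propose to show $\sum_{z\notin\sV_{K^+}}\sup_\theta f(z,A;\theta)=o_P(1)\cdot\sup_\theta f(Z,A;\theta)$, i.e.\ to charge every $z\notin\sV_{K^+}$ against the \emph{single} reference term $f(Z,A;\hat\theta)$, via the bound $\sup_\theta f(z,A;\theta)\leq e^{-c\,t\,n\rho_n}\sup_\theta f(Z,A;\theta)$ for $z$ with $t$ bad nodes. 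That per-$z$ bound is not available. A label $z\notin\sV_{K^+}$ obtained by moving a single node out of the \emph{best} noise-fitting split $z_0\in\sV_{K^+}$ pays only $O_P(n\rho_n)$ relative to $z_0$, while $\sup_\theta f(z_0,A;\theta)$ itself can exceed $\sup_\theta f(Z,A;\theta)$ by a factor as large as $e^{O_P(n^{3/2}\rho_n^{1/2})}$ (this is exactly Theorem \ref{thm_overfit}; even under Assumption \ref{assump_balance_design} the gap is only controlled to $e^{O_P(n)}$). In the sparse regime $n\rho_n=\mathrm{polylog}(n)$, the adaptive noise-fitting gain of overfitted splits swamps $t\,n\rho_n$ for small $t$, so your geometric series $\sum_{t\ge1}(enK^+e^{-cn\rho_n})^t$ does not bound the ratio to $f(Z,A;\hat\theta)$; the population (conditional-expectation) deficit you compute is real, but the \emph{fluctuations} common to $z$ and its nearby split do not cancel when you compare to $Z$, and they are of larger order than the deficit.

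The paper's proof avoids this by never comparing to a single term: for $z$ close to the optimal face (i.e.\ outside the set $J_{\delta_n}$ of labels with population deficit at least $\delta_n$), it compares $z$ to its projection $z_{\perp}\in\sV_{K^+}$, defined by minimizing $\Vert R(z)-R(z_0)\Vert_2$ over $z_0\in\sV_{K^+}$, so that the shared noise-fitting gain cancels. The directional derivative of $G_2(\cdot,S^*)$ orthogonal to the corresponding face of the polyhedron $P_R$ is bounded away from $0$, and the difference-concentration inequality \eqref{eq_bound2} (controlling $X(z)-X(z_\perp)$ for $|z-z_\perp|=m$, not the crude per-$z$ bound) gives $\sup_\theta f(z,A;\theta)\le e^{-\Omega_P(\mu_n)m/n}\sup_\theta f(z_\perp,A;\theta)$; summing then charges each bad $z$ to its projection and keeps the full sum $\sum_{z\in\sV_{K^+}}\sup_\theta f(z,A;\theta)$ on the right-hand side, which is all the lemma asserts. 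Your counting step inherits the same problem: $\binom{n}{t}(K^+)^t$ counts labels at Hamming distance $t$ from a fixed reference, and the multiplicity of references in $\sV_{K^+}$ must be absorbed into the right-hand sum as the paper does, not into the single term $f(Z,A;\hat\theta)$. The labels far from the face (your strict-convexity/identifiability deficit of order $n^2\rho_n$) are handled correctly in your sketch and match the paper's treatment of $J_{\delta_n}$; it is only the near-face regime where your argument, as written, fails.
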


The lemma does not provide a direct simplification of the sum and suggests the reason why obtaining an asymptotic distribution for $L_{K, K^+}$ is difficult. On the other hand, with appropriate concentration we can still derive the asymptotic order of the statistic.

\begin{theorem}
Suppose $\theta^* \in \Theta_{K}$, then overfitting by a $K^+$-block model with $K^+ >K$ gives $L_{K, K^+} = O_P(n^{3/2}\rho_n^{1/2})$.
\label{thm_overfit}
\end{theorem}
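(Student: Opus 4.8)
The plan is to sandwich $L_{K,K^+}$ between $0$ and $O_P(n^{3/2}\rho_n^{1/2})$. For the lower bound, note that any $\theta\in\Theta_K$ lifts to $\Theta_{K^+}$: split one block of the $K$-block model into $K^+-K+1$ sub-blocks whose connectivity rows are copies of that block's rows and whose mixing weights sum to its weight; summing $f(\cdot,A;\cdot)$ over the corresponding refined labels reproduces $g(A;\theta)$ exactly, so $\sup_{\theta\in\Theta_{K^+}}g(A;\theta)\ge\sup_{\theta\in\Theta_K}g(A;\theta)$ and hence $L_{K,K^+}\ge 0$. For the upper bound, $\sup_{\theta\in\Theta_{K^+}}g(A;\theta)\le\sum_{z\in[K^+]^n}\sup_\theta f(z,A;\theta)=(1+o_P(1))\sum_{z\in\sV_{K^+}}\sup_\theta f(z,A;\theta)$ by Lemma~\ref{lem_overfit_split}, while $\sup_{\theta\in\Theta_K}g(A;\theta)\ge\sup_\theta f(Z,A;\theta)$. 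Writing $\ell(z):=\log\sup_\theta f(z,A;\theta)$, it therefore suffices to show $\log\sum_{z\in\sV_{K^+}}\exp\{\ell(z)-\ell(Z)\}=O_P(n^{3/2}\rho_n^{1/2})$.

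For $z\in\sV_{K^+}$ every block of $z$ lies inside a block of $Z$, so $Z_i=\phi(z_i)$ for some $\phi:[K^+]\to[K]$. Inserting the profile maximiser ($\widehat\pi_k=n_k(z)/n$, $\widehat H_{k,l}(z)=O_{k,l}(z)/n_{k,l}(z)$) gives
\[
\ell(z)-\ell(Z)=\underbrace{\sum_a\sum_{k:\phi(k)=a}n_k(z)\log\frac{n_k(z)}{n_a(Z)}}_{P(z)\,\le\,0}\;+\;\underbrace{\tfrac12\sum_{a,b}\Big(\sum_{\phi(k)=a,\,\phi(l)=b}n_{k,l}(z)\gamma_1(\widehat H_{k,l}(z))-n_{a,b}(Z)\gamma_1(\widehat H_{a,b}(Z))\Big)}_{C(z)\,\ge\,0},
\]
where $P(z)\le 0$ because $x\mapsto -x\log x$ is concave, and $C(z)\ge 0$ because $n_{a,b}(Z)=\sum_{\phi(k)=a,\phi(l)=b}n_{k,l}(z)$ and $O_{a,b}(Z)=\sum_{\phi(k)=a,\phi(l)=b}O_{k,l}(z)$, so $\widehat H_{a,b}(Z)$ is the $n_{k,l}$-weighted mean of the $\widehat H_{k,l}(z)$ and $\gamma_1$ is convex. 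Grouping $\sV_{K^+}$ by its (boundedly many) split types and by the (at most $n^{K^+}$) sub-block size vectors, a Stirling estimate shows $\sum_{z:\,\text{fixed sizes}}e^{P(z)}=\mathrm{poly}(n)$ — the combinatorial count of labellings with a given size vector cancels the corresponding proportion-likelihood ratio — hence $\sum_{z\in\sV_{K^+}}e^{P(z)}=\mathrm{poly}(n)$. Bounding $e^{C(z)}$ by $e^{\sup_z C(z)}$ then yields $L_{K,K^+}\le o_P(1)+O(\log n)+\sup_{z\in\sV_{K^+}}C(z)$, and the $O(\log n)$ term is negligible since $n\rho_n/\log n\to\infty$.

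It remains to bound $\sup_{z\in\sV_{K^+}}C(z)$. In each coarse pair $(a,b)$, Taylor-expand $\gamma_1(\widehat H_{k,l})$ about $\widehat H_{a,b}$; the first-order term vanishes because $\sum_{k,l}n_{k,l}\widehat H_{k,l}=n_{a,b}\widehat H_{a,b}$, leaving $\tfrac12\sum_{k,l}n_{k,l}\gamma_1''(\xi_{k,l})(\widehat H_{k,l}-\widehat H_{a,b})^2$. For the ``well populated'' refined pairs $(k,l)$ — those large enough that $\widehat H_{k,l}(z),\widehat H_{a,b}(Z)=\rho_n S^*_{a,b}(1+o_P(1))$ holds uniformly over $z\in\sV_{K^+}$, which is exactly where $n\rho_n/\log n\to\infty$ is used — one has $\gamma_1''(\xi_{k,l})=O(\rho_n^{-1})$, so these pairs contribute at most $O(\rho_n^{-1})\sum_{k,l}(O_{k,l}(z)-n_{k,l}(z)H^*_{\phi(k),\phi(l)})^2/n_{k,l}(z)$; since for fixed $z$ the $A_{i,j}$ entering a given $O_{k,l}(z)$ are i.i.d.\ Bernoulli, Bernstein's inequality together with a union bound over the $\le (K^+)^n$ labels of $\sV_{K^+}$ gives $\sup_z|O_{k,l}(z)-n_{k,l}(z)H^*_{a,b}|=O_P\big(\sqrt{n\,n_{k,l}(z)H^*_{a,b}}\big)$, so this part of $C(z)$ is $O_P(n)$, hence $O_P(n^{3/2}\rho_n^{1/2})$ as $n\rho_n\to\infty$. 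The remaining refined pairs — those containing a sub-block too small to concentrate uniformly — have $\gamma_1(\widehat H_{k,l})$ merely bounded ($0\ge\gamma_1\ge-\log 2$) and must be controlled by a separate, more delicate estimate, using that such pairs carry $o(n^2)$ node pairs, to show their aggregate contribution is $O_P(n^{3/2}\rho_n^{1/2})$. Combining everything with $L_{K,K^+}\ge 0$ gives $0\le L_{K,K^+}\le O_P(n^{3/2}\rho_n^{1/2})$. I expect the main obstacle to be precisely this last point: obtaining a maximal inequality sharp enough uniformly over all $\approx(K^+)^n$ refinements and correctly accounting for highly unbalanced splits with very small sub-blocks, especially in the sparse regime — the same difficulty that underlies the proof of Lemma~\ref{lem_overfit_split} and that prevents one from identifying an explicit limiting distribution here.
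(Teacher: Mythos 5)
Your reduction is sound up to the point where everything hinges on $\sup_{z\in\sV_{K^+}}C(z)$: the nesting argument giving $L_{K,K^+}\ge 0$, the exact decomposition $\ell(z)-\ell(Z)=P(z)+C(z)$ for $z\in\sV_{K^+}$, and the entropy cancellation $\sum_{z\in\sV_{K^+}}e^{P(z)}=\mathrm{poly}(n)$ are all correct. The genuine gap is exactly the step you flag yourself: the contribution of refined pairs $(k,l)$ involving sub-blocks too small for uniform concentration is never bounded, and it is not a removable technicality. For a union bound over the roughly $(K^+)^n$ labels in $\sV_{K^+}$ you need deviation probabilities of order $e^{-cn}$, which for a Bernstein bound on $O_{k,l}(z)$ requires $n_{k,l}(z)\rho_n\gtrsim n$; in the polylog-degree regime $n\rho_n=\mathrm{polylog}(n)$ this already excludes sub-blocks of size up to $n$ divided by a polylog factor, so the ``remaining pairs'' are not a thin exceptional set, and the only bound you offer for them ($\gamma_1$ bounded, $o(n^2)$ node pairs) allows contributions of order $n^2$ up to polylog corrections, far above $n^{3/2}\rho_n^{1/2}$. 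In effect you are attempting the refined bound of Theorem \ref{thm_overfit_refine} without Assumption \ref{assump_balance_design}; the paper imposes that balance assumption precisely because the Taylor-expansion/profile argument does not close when sub-blocks may be arbitrarily small. As written, the proposal therefore does not establish the theorem.

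The paper's own proof of Theorem \ref{thm_overfit} is much cruder and sidesteps block sizes entirely; in particular it does not use Lemma \ref{lem_overfit_split} at all. It bounds $\sup_{\theta\in\Theta_{K^+}}g(A;\theta)$ by $(K^+)^n\max_{z\in[K^+]^n}\sup_{\theta}f(z,A;\theta)$, which costs only $n\log K^+=O(n)$ on the log scale, writes the profile log-likelihood through $F_1\left(O(z)/n^2,N(z)/n^2\right)$, replaces $O(z)/(n^2\rho_n)$ by its conditional mean $RS^*R^T(z)$ uniformly over all $z$ via \eqref{eq_bound1} of Lemma \ref{lem_concentration} with $\epsilon\asymp\sqrt{n/(n^2\rho_n)}$, and uses that the population functional is maximized at the unsplit configuration, so the population difference is nonpositive. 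The fluctuation term is then $Cn^2\rho_n\cdot O_P\left(\sqrt{n/(n^2\rho_n)}\right)=O_P(n^{3/2}\rho_n^{1/2})$, uniformly over all labelings no matter how unbalanced. If you want to keep your decomposition, the same device closes your gap: for $z\in\sV_{K^+}$ the population value of the refined functional coincides with the coarse one (each refined ratio equals $H^*_{\phi(k),\phi(l)}$), so $C(z)$ is controlled by the uniform deviation of $O(z)$ from its conditional mean alone, with no Taylor expansion, Bernstein bound, or case distinction on sub-block sizes needed at this rate. Your finer analysis is the right idea for the sharper $O_P(n)$ bound, but that is Theorem \ref{thm_overfit_refine}, and there it is carried out only over the balanced set $\sN_{K^+}$.
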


The proof is provided in the Appendix. 
%%%%%%%%%%%%%%%%%%

\subsection{Model selection}
\label{sec_md}
The results in the previous sections lead us to construct a penalized likelihood criterion for selecting the optimal block number. The criterion is consistent in the sense that asymptotically it chooses the correct $K$ with probability one. Define
\begin{equation}
\beta(K') = \sup_{\theta\in\Theta_{K'}} \log g(A; \theta) - N_{K'} B_n,
\end{equation}
where $B_n$ gives the order of the penalty term, and $N_{K'}$ is a strictly increasing sequence indexed by $K'$ describing the complexity of the model. The optimal $K_0$ is such that 
\begin{equation}
K_0 = \arg\max_{K'} \beta(K').
\end{equation}

\begin{corollary}
For $K' < K$,  setting $B_n = o(n^2\rho_n)$, 
\begin{equation}
\P_{\theta^*}(\beta(K') < \beta(K)) \to 1. 
\end{equation}
For $K' > K$, setting $B_n$ such that $ B_n n^{-3/2}\rho_n^{-1/2} \to \infty$, 
\begin{equation}
\P_{\theta^*}(\beta(K') < \beta(K)) \to 1. 
\end{equation}
\label{cor_md}
\end{corollary}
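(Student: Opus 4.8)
The plan is to combine the asymptotic results of Theorems \ref{thm_underfit} and \ref{thm_overfit} with the gap estimate \eqref{eq_general_underfit} from Remark \ref{rem_underfit}(iv), reducing everything to comparing the log-likelihood ratio $L_{K,K'}$ against the difference in penalties $(N_{K'}-N_K)B_n$. Observe first that by definition $\beta(K')<\beta(K)$ is equivalent to
\[
L_{K,K'} = \sup_{\theta\in\Theta_{K'}}\log g(A;\theta) - \sup_{\theta\in\Theta_{K}}\log g(A;\theta) < (N_{K'}-N_K)B_n,
\]
so it suffices to control $L_{K,K'}$ on each side. Since $N_{K'}$ is strictly increasing, $N_{K'}-N_K<0$ when $K'<K$ and $N_{K'}-N_K>0$ when $K'>K$; in both cases the magnitude $|N_{K'}-N_K|$ is a fixed positive constant (for each fixed $K'$), so the relevant threshold is of exact order $B_n$.

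For the underfitting case $K'<K$, I would argue as follows. If Assumptions \ref{assump_unique} and \ref{assump_iden} hold for the optimal merging route down to $K'$, then by Theorem \ref{thm_underfit} (and its extension in Remark \ref{rem_underfit}(ii)) we have $L_{K,K'} = n^{2}\rho_n\,\mu + O_P(n^{3/2}\rho_n)$ with $\mu<0$ a fixed negative constant (the merged KL divergence in \eqref{eq_kl} is strictly positive under identifiability), so $L_{K,K'} = -\Theta_P(n^2\rho_n)$. If those assumptions fail, the cruder bound \eqref{eq_general_underfit} still gives $L_{K,K'}\le -\Omega_P(n^2\rho_n)$. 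Either way, $L_{K,K'}\le -c\,n^2\rho_n$ with probability tending to one for some $c>0$. Since $B_n=o(n^2\rho_n)$, the negative threshold $(N_{K'}-N_K)B_n$ is $o(n^2\rho_n)$ as well, hence eventually larger than $-c\,n^2\rho_n$; therefore $L_{K,K'}<(N_{K'}-N_K)B_n$ with probability $\to 1$. A union bound over the finitely many $K'<K$ completes this half.

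For the overfitting case $K'>K$, Theorem \ref{thm_overfit} gives $L_{K,K'} = O_P(n^{3/2}\rho_n^{1/2})$, i.e. for any $\varepsilon>0$ there is $M$ with $\P(L_{K,K'} > M n^{3/2}\rho_n^{1/2})<\varepsilon$ for all large $n$. The threshold $(N_{K'}-N_K)B_n$ is positive of order $B_n$, and the hypothesis $B_n n^{-3/2}\rho_n^{-1/2}\to\infty$ says precisely that $B_n$ dominates $n^{3/2}\rho_n^{1/2}$; hence $(N_{K'}-N_K)B_n > M n^{3/2}\rho_n^{1/2}$ eventually, so $\P(L_{K,K'}\ge (N_{K'}-N_K)B_n)\to 0$. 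Again a union bound over the finitely many candidate $K'>K$ (one may restrict attention to $K'$ in a bounded range, or note the bound in Theorem \ref{thm_overfit} is uniform in the relevant regime) yields $\P_{\theta^*}(\beta(K')<\beta(K))\to 1$.

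The only genuine subtlety — not a real obstacle given what is already proved — is making sure the constants in the $O_P$/$\Omega_P$ statements do not secretly depend on $K'$ in a way that breaks the finite union bound; since $K$ is fixed and we compare against each fixed alternative $K'$ in turn (or a fixed finite range), this is automatic. I would therefore present the argument as: (i) rewrite the event in terms of $L_{K,K'}$; (ii) invoke Theorem \ref{thm_underfit}/Remark \ref{rem_underfit}(ii),(iv) for $K'<K$ and Theorem \ref{thm_overfit} for $K'>K$ to pin down the stochastic order of $L_{K,K'}$; (iii) match against the penalty order using the two stated conditions on $B_n$; (iv) union bound. The main ``work'' has already been done in the two theorems, so the corollary is essentially a bookkeeping consequence of the separation of scales $n^{3/2}\rho_n^{1/2}\ll B_n\ll n^2\rho_n$ (which is nonempty precisely because $n\rho_n/\log n\to\infty$, so $n^2\rho_n / (n^{3/2}\rho_n^{1/2}) = (n\rho_n)^{1/2}\to\infty$).
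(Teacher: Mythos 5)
Your proposal is correct and follows essentially the same route as the paper: rewrite $\beta(K')<\beta(K)$ as a comparison of $L_{K,K'}$ with the penalty difference, then invoke Theorem \ref{thm_underfit} (with Remark \ref{rem_underfit}) for $K'<K$ and Theorem \ref{thm_overfit} for $K'>K$ to match the stochastic order of the likelihood ratio against $B_n$ in the regime $n^{3/2}\rho_n^{1/2}\ll B_n\ll n^2\rho_n$. The only differences are cosmetic (your explicit union-bound remark and the fallback to the crude bound \eqref{eq_general_underfit} when the uniqueness/identifiability assumptions fail, which the paper also notes).
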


\begin{proof}
For $K' < K$, generalizing Theorem \ref{thm_underfit},
\begin{align}
& \P_{\theta^*} \left( \beta(K') < \beta(K) \right) 	\notag\\
= & \P_{\theta^*} \left( n^{-3/2}\rho_n^{-1} \log \frac{\sup_{\theta\in\Theta_{K'}} g(A; \theta)}{\sup_{\theta\in\Theta_{K}} g(A; \theta)} - \sqrt{n}\rho_n^{-1}\mu \right.	\notag\\
 & \qquad\qquad\qquad \left. < (N_{K'} - N_{K}) \frac{B_n}{n^{3/2}\rho_n} - \sqrt{n}\rho_n^{-1}\mu \right) 	\notag\\
\to & 1,
\label{eq_under_probability}
\end{align}
since $B_n = o(n^{2}\rho_n)$ and $-\rho_n^{-1}\mu \geq C(\theta^*)$ for some positive constant depending on $\theta^*$. In general the same conclusion holds by Remark \ref{rem_underfit} (iii).

For $K' > K$, using Theorem \ref{thm_overfit},
\begin{align}
& \P_{\theta^*} \left( \beta(K') < \beta(K) \right) 	\notag\\
= & \P_{\theta^*} \left( \frac{1}{n^{3/2}\rho_n^{1/2}} \log\frac{\sup_{\theta\in\Theta_{K'}} g(A; \theta)}{\sup_{\theta\in\Theta_{K}} g(A; \theta)} < (N_{K'} - N_{K}) \frac{B_n}{n^{3/2}\rho_n^{1/2}}\right)	\notag\\
\to & 1,
\label{eq_over_probability}
\end{align}
when $ B_n n^{-3/2}\rho_n^{-1/2} \to \infty$. 
\end{proof}

Since the ratio of the upper bound $n^2\rho_n$ and the lower bound $n^{3/2} \rho_n^{1/2} $ tends to infinity, such a sequence $B_n$ exists. Choosing $B_n$ in this interval, we have $K_0=K$ with probability tending to 1. However, we also note that for finite cases with moderate-sized $n$, $\sqrt{n} \rho_n^{-1}\mu$ in \eqref{eq_under_probability} is small, making it easy to over penalize with large $B_n$. At the same time, the lower bound in Theorem \ref{thm_overfit} is not tight and can be refined further. 

We further assume the following holds for tractable approximation.
\begin{assumption}
The maximum is achieved in the set $\sN_{K^+} = \{z\in\sV_{K^+} \mid n_k(z) \geq \epsilon n \text{ for all }k, \text{ for some }\epsilon >0,  \}$. 
\label{assump_balance_design}
\end{assumption}

Assumption \ref{assump_balance_design} assumes the maximum can only be achieved on a loosely balanced block design. The assumption and Lemma \ref{lem_overfit_split} imply it remains to analyze the order of $\max_{z\in\sN_{K^+}} \sup_{\theta\in\Theta_{K^+}} \log f(z,A;\theta)$. The following theorem shows the order of $L_{K, K^+}$ can be refined to $O_P(n)$. The details can be found in the Appendix. 

\begin{theorem}
Under Assumption \ref{assump_balance_design}, $L_{K,K^+}$ is of order $O_P(n)$ for $K^+>K$.
\label{thm_overfit_refine}
\end{theorem}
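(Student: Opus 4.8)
The plan is to refine the crude bound $L_{K,K^+} = O_P(n^{3/2}\rho_n^{1/2})$ from Theorem \ref{thm_overfit} by working directly with the complete likelihood on the restricted label set $\sN_{K^+}$, which Assumption \ref{assump_balance_design} and Lemma \ref{lem_overfit_split} permit. Concretely, $L_{K,K^+} = \log\big(\sup_{\theta\in\Theta_{K^+}} g(A;\theta)\big) - \log\big(\sup_{\theta\in\Theta_K} g(A;\theta)\big)$, and by the observation cited from \cite{bickel:2013} the denominator is, up to $o_P(1)$ in the exponent, $\sup_{\theta\in\Theta_K} f(Z,A;\theta)$, while Lemma \ref{lem_overfit_split} plus Assumption \ref{assump_balance_design} reduce the numerator to $\max_{z\in\sN_{K^+}}\sup_{\theta\in\Theta_{K^+}}\log f(z,A;\theta)$ (plus an $O(\log\#\sN_{K^+})=O(n\log K^+)=O(n)$ term absorbing the number of summands, which is harmless at the $O_P(n)$ scale). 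So it suffices to bound, uniformly over $z\in\sN_{K^+}$, the difference $\sup_{\theta\in\Theta_{K^+}}\log f(z,A;\theta) - \sup_{\theta\in\Theta_K}\log f(Z,A;\theta)$ by $O_P(n)$.

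The key step is to exploit that every $z\in\sV_{K^+}$ is a \emph{refinement} of $Z$: each $z$-block sits inside a single $Z$-block, so $f(z,A;\theta)$ maximized over $\Theta_{K^+}$ is a likelihood in a model that strictly contains the true $K$-block model restricted to the labelling $Z$. For a fixed such $z$, the profile log-likelihood $\ell(z) := \sup_{\theta}\log f(z,A;\theta)$ has MLEs $\widehat H_{k,l} = O_{k,l}(z)/n_{k,l}(z)$, $\widehat\pi_k = n_k(z)/n$, so $\ell(z) = \sum_k n_k(z)\log\widehat\pi_k + \tfrac12\sum_{k,l} n_{k,l}(z)\,\gamma_1(\widehat H_{k,l})$ (with the $\gamma_2$-analogue in the sparse regime). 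Comparing this with the corresponding expression for $Z$: since the $z$-partition refines $Z$, writing each true block $a$ as a disjoint union of $z$-blocks, the matrix entries $\widehat H^{(z)}$ are empirical cell frequencies within sub-blocks of a cell $(a,b)$ whose true parameter is the constant $H^*_{a,b}$. The gain in log-likelihood from this extra freedom is exactly a sum, over pairs $(a,b)$ of true blocks, of terms measuring how much the empirical within-cell edge counts deviate from their common mean — i.e. a sum of (empirical) Kullback–Leibler–type fluctuations of $\mathrm{Bernoulli}(H^*_{a,b})$ counts split into $O(\epsilon^{-2})$-many sub-cells each of size $\Theta(n^2\rho_n)$. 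By a Bernstein/Bennett concentration bound for such sums (the number of sub-cells is $O(1)$ by Assumption \ref{assump_balance_design}, and each sub-cell contributes a fluctuation of order $O_P(1)$ in natural scaling, but summing $\Theta(n^2)$ Bernoulli pairs the aggregate log-likelihood surplus per cell is $O_P(1)$ after the right normalization, hence $O_P(1)$ per cell, $O_P(1)$ overall for fixed $z$), and then a union bound over $z\in\sN_{K^+}$ paying $\log\#\sN_{K^+}=O(n)$, one gets $\max_{z\in\sN_{K^+}}\big(\ell(z)-\ell(Z)\big) = O_P(n)$.

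The main obstacle is making the union bound over the exponentially many $z\in\sN_{K^+}$ quantitatively compatible with an $O_P(n)$ conclusion: for each fixed $z$ the log-likelihood surplus must be shown to have sub-exponential (really, controlled exponential-moment) tails at scale $1$, so that a bound of the form $\P(\ell(z)-\ell(Z) > tn)\le \exp(-c t n)$ holds uniformly; only then does multiplying by $\#\sN_{K^+}\le (K^+)^n = e^{O(n)}$ leave a bound that is $O_P(n)$. This requires a careful concentration inequality for the profile-likelihood functional $O_{k,l}\mapsto n_{k,l}\gamma_1(O_{k,l}/n_{k,l})$ that is uniform in the sub-block structure induced by $z$ — I would obtain it by a chaining/net argument on the space of balanced refinements combined with Bernstein's inequality for the individual Bernoulli cells, using Assumption \ref{assump_balance_design} to keep all $n_{k,l}(z)=\Theta(n^2)$ (and $=\Theta(n^2\rho_n)$ in edges) so that the deviation rate functions are well-behaved. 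The sparse case $\rho_n\to 0$ is handled by the same argument with $\gamma_1$ replaced by $\gamma_2$ and an extra multiplicative $\rho_n$ in the scaling of the cell counts, using $n\rho_n/\log n\to\infty$ to ensure the Bernstein bound still has the required exponential decay at scale $n$.
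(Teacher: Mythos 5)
Your proposal is correct and its skeleton coincides with the paper's: reduce via Lemma \ref{lem_overfit_split} and Assumption \ref{assump_balance_design} to $\max_{z\in\sN_{K^+}}\sup_{\theta\in\Theta_{K^+}}\log f(z,A;\theta)-\log f(Z,A;\theta^*)$ up to $O(n)$, split off the multinomial part (nonpositive by superadditivity of $x\log x$, and in any case $O(n)$), and recognize the remainder as a within-cell likelihood surplus over the refinement $z$ of $Z$. The difference is how the maximum over the exponentially many $z$ is controlled. The paper introduces no new tool: bound \eqref{eq_bound1} of Lemma \ref{lem_concentration} already carries the union bound over all labelings with exponent $C\epsilon^{2}\mu_n$, so taking $\epsilon\asymp n^{-1/2}\rho_n^{-1/2}$ (for which $\epsilon^{2}\mu_n\asymp n$, exactly matching the $n\log K^+$ entropy) gives $\hat H_{k,l}-H^*_{h(k),h(l)}=O_P(n^{-1/2}\rho_n^{1/2})$ uniformly over $\sN_{K^+}$, Assumption \ref{assump_balance_design} keeping $n_{k,l}(z)\gtrsim \epsilon^2 n^{2}$; a second-order Taylor expansion of \eqref{eq_overfit_expansion} then bounds it by $Cn^{2}(\hat H-H^*)^{2}/\rho_n=O_P(n)$. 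You instead bound the surplus of each fixed $z$ by an exponential tail at scale $n$ and pay the entropy explicitly. That route works too, and is simpler than you make it: since $z\in\sV_{K^+}$ refines $Z$, the surplus is $\tfrac12\sum_{k,l}n_{k,l}(z)\,D_{KL}\bigl(\mathrm{Bern}(\hat H_{k,l})\,\Vert\,\mathrm{Bern}(H^*_{h(k),h(l)})\bigr)$ over $O(1)$ independent cells, and the binomial Chernoff bound gives $\P\bigl(n_{k,l}D_{KL}\geq t\bigr)\leq 2e^{-t}$ for every $t$ and every $\rho_n$, whence $\P(\text{surplus}>tn)\leq Ce^{-ctn}$ with no chaining, no Bernstein, and no quadratic approximation to police on large-deviation events. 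So the chaining/net construction you flag as the main obstacle is unnecessary; replacing it by the Chernoff--KL bound (or simply citing the uniformity already in Lemma \ref{lem_concentration}, as the paper does) completes your argument. The trade-off: the paper's version is shorter given its existing lemma, while yours yields explicit exponential tails and makes transparent why the rate is $O_P(n)$ --- an $O_P(1)$ chi-square-type surplus per cell, inflated by the $n\log K^+$ labeling entropy.
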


It follows then choosing $B_n$ growing slightly faster than $n$ will ensure consistency in the sense described in Corollary \ref{cor_md}. Thus we choose a penalized likelihood of the following form,
\begin{equation}
\beta(K') = \sup_{\theta\in\Theta_{K'}} \log g(A; \theta) - \lambda \cdot \frac{K'(K'+1)}{2}n\log n,
\label{eq_criterion}
\end{equation}
where the complexity term corresponds to the number of parameters in the edge probability matrix and the constant $\lambda$ is a tuning parameter. Similar to many BIC-type criteria, choosing the tuning parameter is a challenging problem even though it does not affect the asymptotic properties. We discuss this problem in Section \ref{sec_sim}.

%%%%%%%%%%%%%%%%%%%%%%%%%%%%%%
\subsection{Extension to a degree-corrected stochastic block model}
\label{subsec_dcsbm}
In practice, the SBM often oversimplifies the community structures by assuming all the nodes within a block have the same expected degree, thereby excluding networks with ``hub'' nodes and other possible degree variations within blocks. To address this limitation, \citet{KarrerNewman:2010} proposed the degree-corrected stochastic block model (DCSBM) by setting 
\begin{equation}
\E\left( A_{i,j} \mid Z, \omega \right) = \omega_i \omega_jH_{Z_i, Z_j}, \quad i\neq j
\end{equation}
where $\omega=(\omega_1,\dots, \omega_n)$ is the set of node degree parameters with some identifiability constraint. 

As before, $Z\sim\text{Multinomial}(\pi)$. We also treat $\omega$ as a latent variable and assume $\omega_i\mid Z \sim n_k(Z) \cdot \text{Dirichlet}(\1)$ for $Z_i = k$ so that $\omega$ satisfies the identifiability constraint $\sum_{i:Z_i = k} \omega_i = n_k$ for every $k$. Similar to \citet{KarrerNewman:2010}, we replace the Bernoulli likelihood by the Poisson likelihood and assume $A_{i,i} \sim\text{Poisson}(\omega_i^2 H_{Z_i, Z_i}/2)$ to simplify derivation. As noted in \citet{KarrerNewman:2010} and \citet{zhao:2012}, sparse networks are well approximated by the Poisson distribution and little difference was found in practice between the two choices. The assumption on the diagonal entries also does not change the asymptotic results. Therefore given $(Z, \omega)$, the log conditional likelihood of $A$ is (up to a constant)
\begin{align} 
\log f(A \mid Z, \omega; \theta) & = \frac{1}{2}\sum_{i,j} \left( A_{i,j} \log (\omega_i \omega_j H_{Z_i, Z_j}) - \omega_i \omega_j H_{Z_i, Z_j} \right) 	\notag\\
 & =  \sum_{i,j} A_{i,j} \log \omega_i + \frac{1}{2}\sum_{k,l} \left( O_{k,l}(Z) \log H_{k,l}  - n_k(Z) n_l(Z) H_{k,l}\right).
\end{align}

In this case, the likelihood function $f(Z, A; \theta)$ has a tractable form and one can show Lemma \ref{lem_bounds} holds provided $n^{1/2}\rho_n/\log n \to\infty$. The stricter condition on the degree density ensures even with node degree variations (in the worst case $\E(A_{i,j} \mid Z, \omega) \sim\rho_n/n$) there still exist enough edges for parameter estimation. Similar arguments apply to show that the criterion \eqref{eq_criterion} is asymptotically consistent for this DCSBM. As the derivation is largely similar to the regular SBM case, we provide a proof sketch in the supplementary material. 

%%%%%%%%%%%%%%%%%%%%%%%%%%%%%%

\subsection{Likelihood approximations}
In practice, direct computations of the likelihood function $g(A; \theta)$ and its supremum involve an exponential number of summands and quickly become intractable as $n$ grows. In this section we provide practical ways to approximate the likelihood and discuss conditions under which asymptotic consistency is preserved. 

\subsubsection*{Variational likelihood for regular SBM}
Using the EM algorithm to optimize over $\theta$ requires computing the conditional distribution of $Z$ given $A$, which is not factorizable in this case. Variational methods tackle the true conditional distribution $f_{Z \mid A; \theta}$ with the mean field approximation, thus simplifying the local optimization at each iteration. Under the regular SBM, the variational log likelihood $J(q, \theta; A)$ for a $K'$-block model is defined as
\begin{equation}
J(q,\theta; A) = -D_{KL}(q\Vert f_{Z \mid A; \theta}) + \log g(A;\theta),
\end{equation}
where $q\in \mathcal{D}_{K'}$ is any product distribution with $q(z) = \prod_{i=1}^{n} q_i(z_i)$, $1\leq z_i \leq K'$. The variational estimates $\hat{\theta}_{K'}^{\text{VAR}}$ is given by
\[
\hat{\theta}_{K'}^{\text{VAR}} = \arg\max_{\theta\in\Theta_{K'}} \max_{q\in\mathcal{D}_{K'}} J(q,\theta;A),
\]
which can be optimized using the EM algorithm in \citet{Daudin:2008}. Also we note that $J(q, \theta; A)$ simplifies to
\begin{align*}
J(q, \theta; A) = & \sum_{i=1}^{n} \sum_{k=1}^{K'} q_i(k) (-\log q_i(k) + \log \pi(k)) 	\\
 & + \sum_{i < j} \sum_{k, l =1}^{K'} q_i(k) q_j(l) \left( A_{ij} \log H_{k,l} + (1-A_{ij}) \log (1-H_{k,l}) \right)
\end{align*}
and hence can be easily evaluated. 

We can replace the likelihood in \eqref{eq_criterion} by the variational log likelihood $J$ without changing its asymptotic performance. More precisely, the criterion with variational approximation
\begin{equation}
\beta^{\text{VAR}}(K') = \sup_{\theta\in\Theta_{K'}} \sup_{q\in\mathcal{D}_{K'}} J(q,\theta;A) - \lambda \cdot \frac{K'(K'+1)}{2} n\log n
\label{eq_var_criterion}
\end{equation}
is still asymptotically consistent. Noting that 
\begin{enumerate}
\item[(i)]
$J(q,\theta;A) \leq  \log g(A; \theta)$ for any $q\in \mathcal{D}_{K'}$;

\item[(ii)]
$\sup_{\theta\in\Theta_{K}} \sup_{q\in\mathcal{D}_{K}} J(q,\theta;A) - \sup_{\theta\in\Theta_{K}} \log g(A; \theta) = O_P(1)$ as shown in \cite{bickel:2013},
\end{enumerate}
it can be easily verified that \eqref{eq_under_probability} and \eqref{eq_over_probability} still hold. Although (ii) applies to the global optimum of $J(q,\theta; A)$ which may not be achieved by the EM algorithm, we note that it can be relaxed to accommodate for the difference in practice. Provided the difference between the local optimum found by the algorithm and the global optimum is bounded by $o_P(n\log n)$, asymptotic consistency still holds. 

\subsubsection*{Label estimation}

The computation time of variational likelihood grows quickly with network size and becomes more complicated for degree corrected models. On the other hand, a number of algorithms are available for estimating the latent labels in a computationally efficient way under both regular SBM and DCSBM. Typically these algorithms require specifying the block number, hence let $\hat{Z}(K')$ be the estimated labels corresponding to block number $K'$. Then $f_{ML}(\hat{Z}(K'), A) = \sup_{\theta\in\Theta_{K'}} f(\hat{Z}(K'), A; \theta)$ is the maximum complete likelihood by plugging in the estimated labels. We assume that the estimation algorithm is strongly consistent with the same convergence rate as in Theorem 1 of \citet{BickelChen:2009}.

\begin{assumption}
There exists a sequence $b_n\to\infty$ such that 
\begin{equation}
\P (\hat{Z}(K) \neq \tau (Z) ) = O(n^{-b_n}),
\end{equation}
where $\tau$ is a permutation on $[K]$. 
\label{assump_label_rate}
\end{assumption} 

Such a convergence rate can be achieved by e.g., profile maximum likelihood \citep{BickelChen:2009}. For computational efficiency we will use the pseudo-likelihood algorithm developed by \citet{Amini:2013}, which is available for both regular SBM and DCSBM. Weak consistency for label estimation was shown in \citet{Amini:2013}. However, the plug-in estimates of the block model parameters are still consistent. Under Assumption \ref{assump_label_rate}, observing that 

\begin{enumerate}
\item
$f_{ML}(\hat{Z}(K'), A) \leq \sup_{\theta\in\Theta_{K'}} \log g(A; \theta)$;

\item
$\log f_{ML}(\hat{Z}(K), A) = \sup_{\theta\in\Theta_K}\log g(A; \theta)(1+o_P(1))$,  
\end{enumerate}
it is easy to see \eqref{eq_under_probability} and \eqref{eq_over_probability} still hold, and the criterion
\begin{equation}
\beta^{ML}(K') = \log f_{ML}(\hat{Z}(K'), A) - \lambda\cdot \frac{K'(K'+1)}{2} n \log n
\label{eq_label_criterion}
\end{equation}
is asymptotically consistent. 

In the next section, we use simulated data to demonstrate how the criterion performs in practice. We approximate the likelihood using the variational EM algorithm for regular SBM and the pseudo-likelihood algorithm for DCSBM.

%%%%%%%%%%%%%%%%%%%%%%%%%%%%%%%%%%%%
%%%%%%%%%%%%%%%%%%%%%%%%%%%%%%%%%%%%

\section{Simulations}
\label{sec_sim}
\subsection{Goodness of fit}
\label{subsec_gof}
We first examined how well the normal limiting distribution approximated the empirical distribution of the statistic in the case of underfitting. Figure \ref{fig_density_cvg} plots the distribution of $n^{-3/2}L_{K,K-1}$ for $n=200$ and $n=500$ obtained from 200 replications for the following two scenarios:
\begin{enumerate}
\item[(a)] $K=2$, $\pi^*=(0.4, 0.6)$, $H^*=\left( \begin{array}{cc}
									0.15 & 0.05	\\
									 	& 0.01	\\
									\end{array}\right)$;
\item[(b)] $K=3$, $\pi^*=(0.4, 0.3, 0.3)$, $H^*=\left( \begin{array}{ccc}
									0.2 & 0.1 & 0.1	\\
										& 0.2 & 0.03	\\
										& 	& 0.1	\\
									\end{array}\right)$.
\end{enumerate}
The log likelihoods are approximated by the variational EM algorithm initialized by regularized spectral clustering \citep{joseph:2013}. The solid curves are normal densities with mean $\mu_2(\theta^*)$ and $\sigma(\theta^*)$ given in Theorem \ref{thm_underfit}. Even though the $O(n)$ term in $\mu_2(\theta^*)$ diminishes asymptotically for $\rho_n$ going to 0 slowly, we found it essential to correct for the bias in the finite sample regimes above. In both cases, the convergence to the Gaussian shape appears faster than the convergence to the mean, and a bias exists for $n=200$. When the network size reaches 500, the empirical distributions are well approximated by their limiting distribution. We note that the bias should not have an adverse effect on model selection since it is in the direction away from zero, making it easier to separate the two models.

\begin{figure}[!h]
\centering
\subfloat[]{\includegraphics[width = 2.5in]{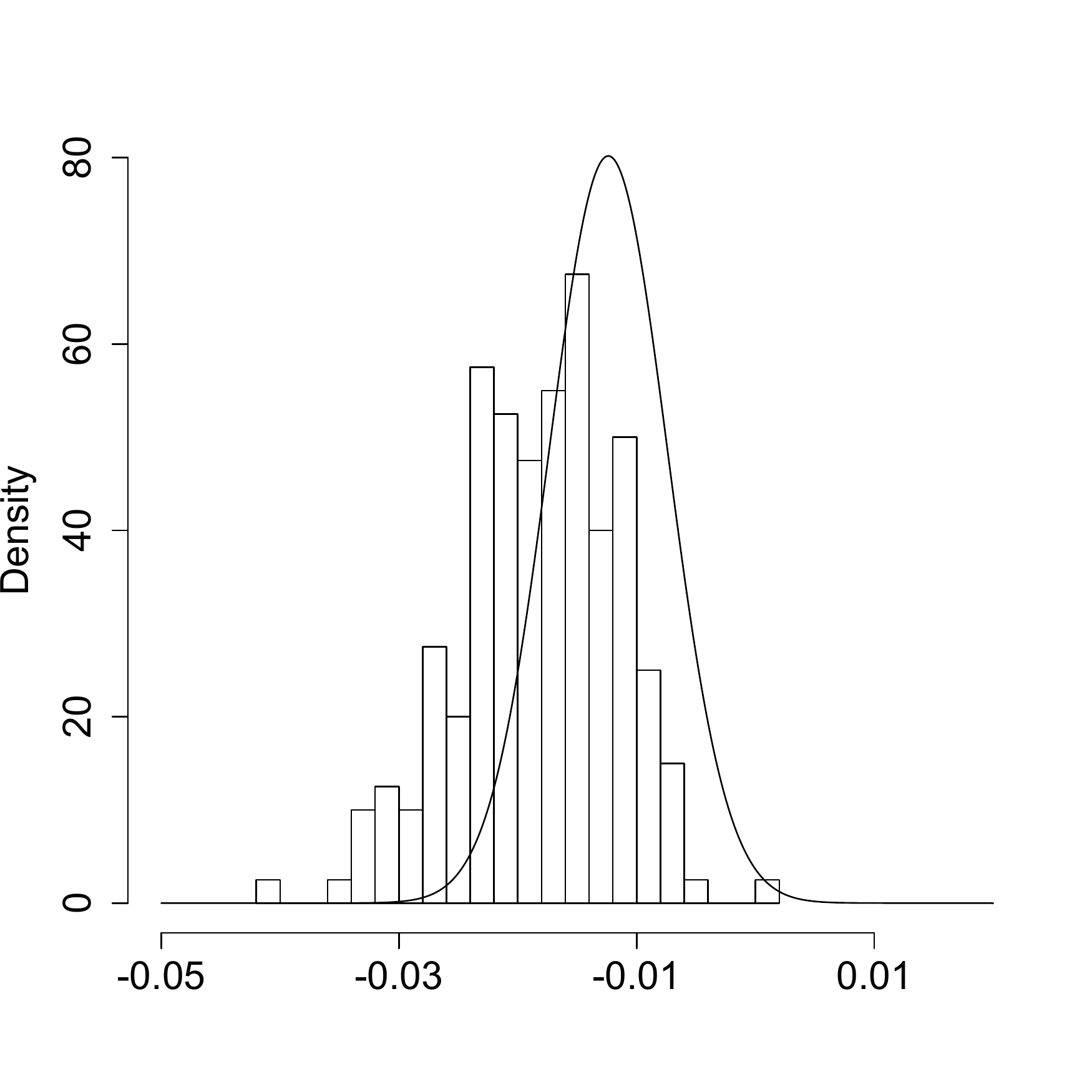}}
\subfloat[]{\includegraphics[width = 2.5in]{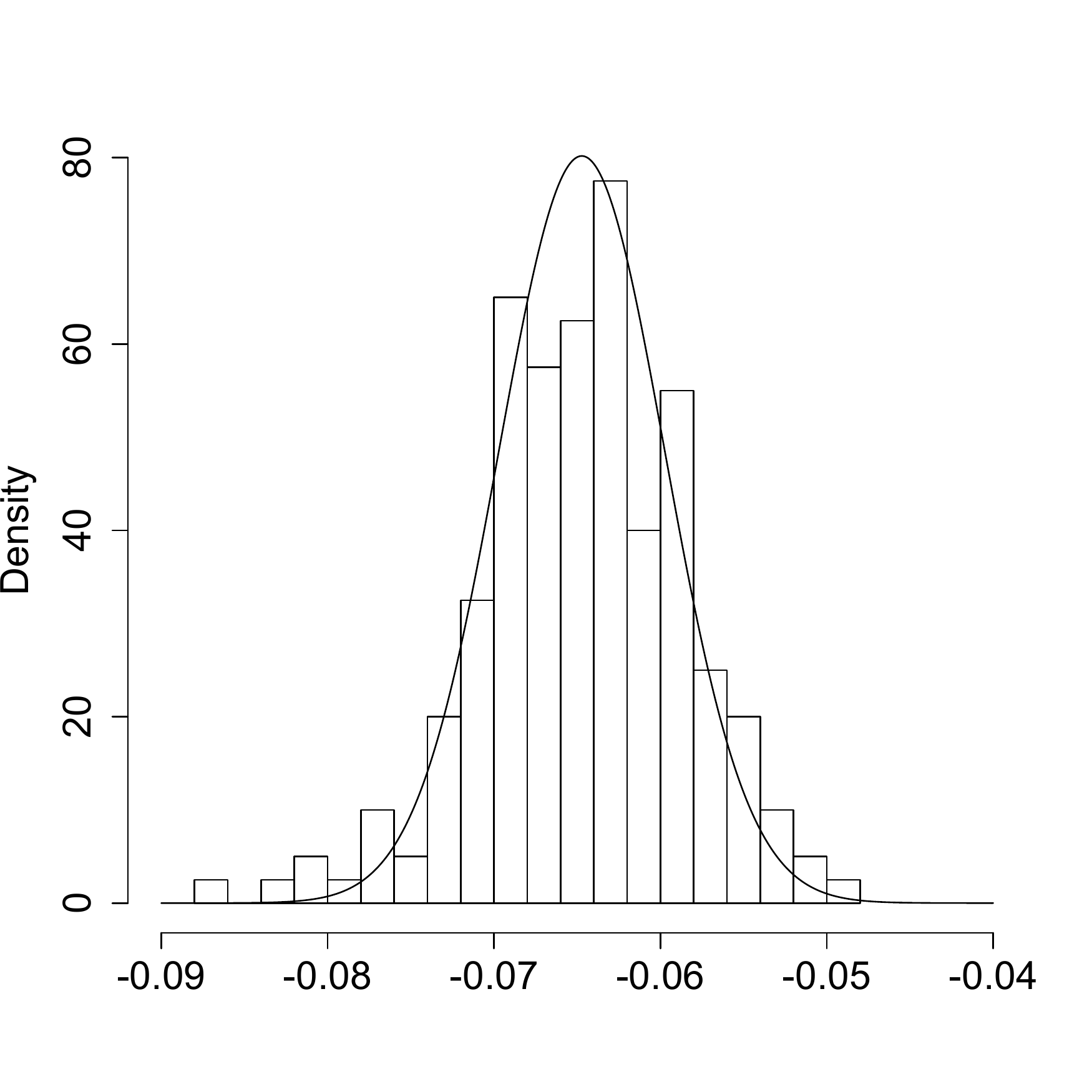}} \\
\subfloat[]{\includegraphics[width = 2.5in]{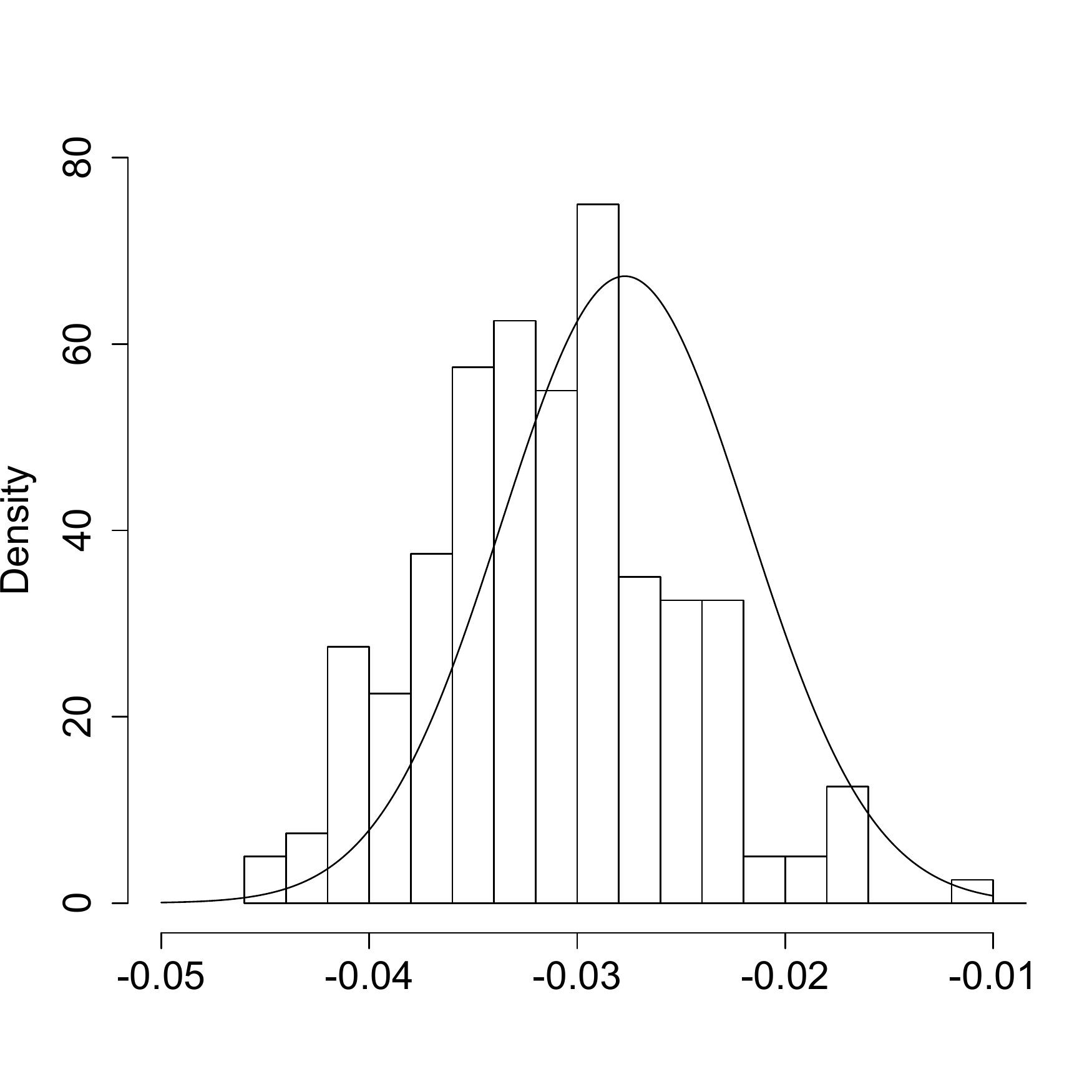}}
\subfloat[]{\includegraphics[width = 2.5in]{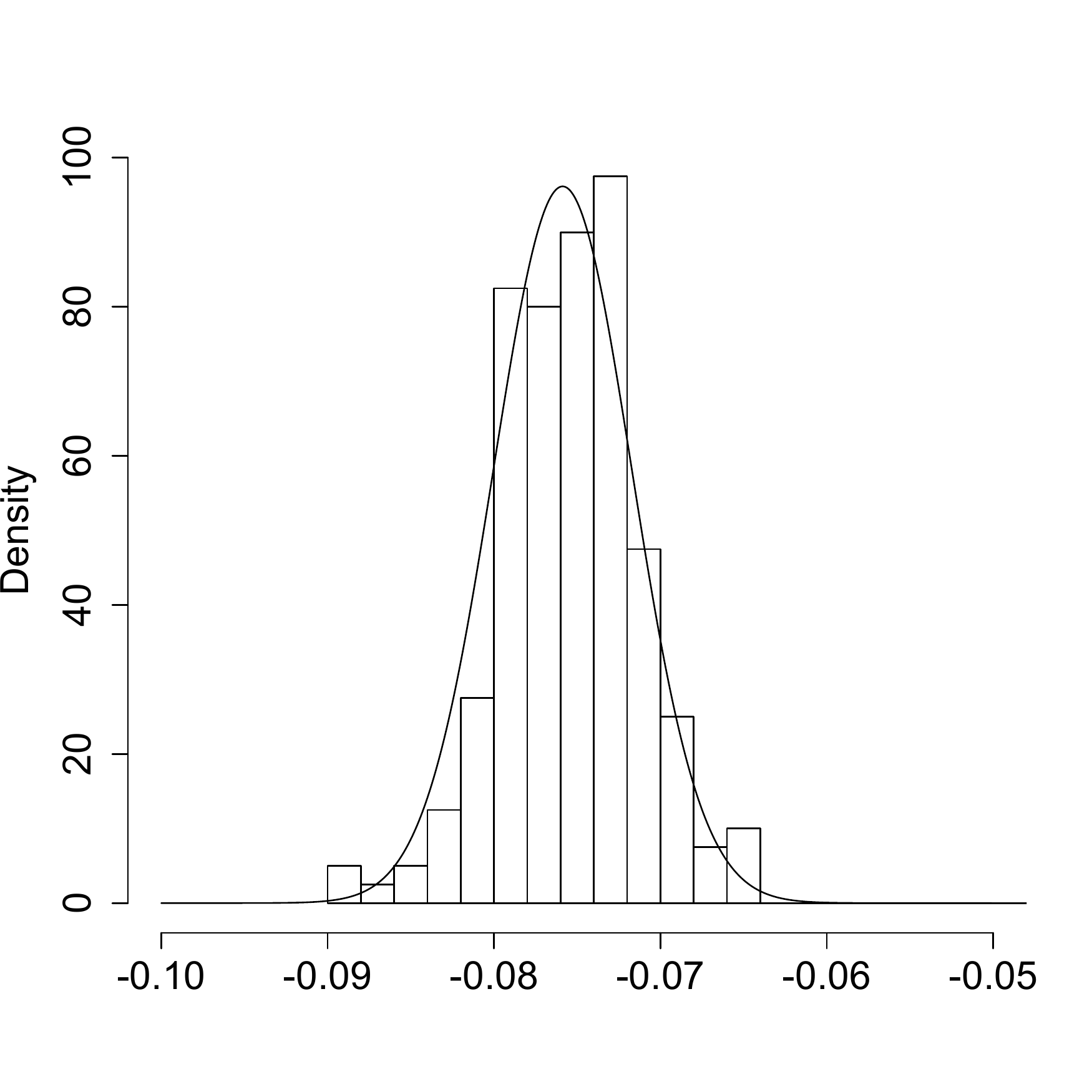}}
\caption{Empirical distributions of $n^{-3/2}L_{K, K-1}$ for (a), (b) K=2, $\pi^*$ and $H^*$ as described in scenario (a); (c), (d) K=3, $\pi^*$ and $H^*$ as described in scenario (b). $n=200$ in (a) and (c); $n=500$ in (b) and (d). The solid curves are normal densities with mean $\mu_2(\theta^*)$ and $\sigma(\theta^*)$ as given in Theorem \ref{thm_underfit}. }
\label{fig_density_cvg}
\end{figure}

\subsection{Selection of tuning parameter}
Before we investigate the finite sample performance of our model selection criterion, we note that \eqref{eq_criterion} involves a tuning parameter $\lambda$. We next propose a heuristic scheme for selecting $\lambda$.  Similar to implementing BIC or AIC-type criteria, a maximum block number $K_{\max}$ to be fitted needs to be chosen first. Then $\lambda$ is chosen by the following algorithm:

\begin{enumerate}
\item
For each choice of $\lambda$, compute $\beta(K')$ for $K' = 1, \dots, K_{\max}$.
\item
Normalize $(-\beta(1), \dots, -\beta(K_{\max}))$ into a probability vector $(w_1, \dots, w_{K_{\max}})$ so that they sum to 1.
\item
Choose $\lambda$ that maximizes the entropy $-\sum_{k} w_k \log(w_k)$.
\item
If ties exist, choose the largest $\lambda$.
\end{enumerate} 
Here $\beta(K')$ can be computed from either the variational likelihood \eqref{eq_var_criterion} or the plug-in maximum likelihood \eqref{eq_label_criterion}. Heuristically, this algorithm chooses a $\lambda$ that maximizes the ``peakedness" of the profile of the penalized likelihoods $(\beta(1), \dots, \beta(K_{\max}))$ and hence the amount of signal contained in it. In the following sections, $\lambda$ was chosen in the interval $[0, 0.3]$ with an increment of $1\times 10^{-3}$; $K_{\max} = 10$ for all simulated data.

\subsection{Performance comparison with other methods}
%Next we investigated how the success rate of the criterion \eqref{eq_var_criterion} changes with respect to the tuning parameter $\lambda$. Figure \ref{fig_C} shows the fraction of the penalized likelihood selecting the correct $K$ out of 50 trials for $\lambda$ values varying between 0 and 4. The generative parameters for $K=2$ and $K=3$ are given in scenarios (a) and (b), and in addition a $K=5$ model was generated with $\pi^*_i = 0.2$ for all $i$ and the entries in $H^*$ varying between 0.06 and 0.19. For $K=2$ and $K=3$, the penalized likelihood achieves reasonable success rate for $\lambda$ smaller than 3 when the network size reaches 200. When $n=500$, the success rate appears robust to the choice of $\lambda$ and is maintained at 1 for a wide range of values. For $K=5$, however, it becomes difficult to select the correct $K$ since the task of fitting also becomes harder as $K$ increases.

%\begin{figure}[!h]
%\centering
%\subfloat[]{\includegraphics[width = 1.7in]{selection_K2.pdf}}
%\subfloat[]{\includegraphics[width = 1.7in]{selection_K3.pdf}} 
%\subfloat[]{\includegraphics[width = 1.7in]{selection_K5.pdf}}
%\caption{The fraction of the penalized likelihood with difference values of $\lambda$ successfully choosing the correct $K$ out of 50 iterations for (a) $K=2$, $\pi$ and $H^*$ as described in scenario (a); (b) $K=3$, $\pi$ and $H^*$ as described in scenario (b); (c) $K=5$, $\pi_i=0.2$ for all $i$, $H^*$ with entries varying between 0.06 and 0.19. }
%\label{fig_C}
%\end{figure}

To see how our criterion (denoted \texttt{plh} for penalized likelihood) performs against other existing model selection methods, we compare its success rate with variational Bayes (\citet{latouche:2012}, denoted \texttt{vb}) and the 3-fold network cross validation method in \citet{chen:2014} (denoted \texttt{ncv}). Since \texttt{vb} is only available for regular SBM, only \texttt{ncv} is included for DCSBM. \texttt{plh} is computed via either the variational EM for regular SBM or the pseudo likelihood algorithm \citep{Amini:2013} for DCSBM.

In Figure \ref{fig_comp} shows the average success rates of all three methods for data generated from regular SBM. %In this case, the variational EM algorithm was used for \texttt{plh}. 
50 networks of size 500 were generated for each parameter set with $K=2, 3, 4$, $H^* = \rho S^*$, and $\rho\in\{0.02, 0.04, \dots, 0.1\}$. The average degrees of these networks range from around 12 to 75. In general, the success rate of each method decreases as the networks become sparser and $K$ increases, since the task of fitting also becomes harder. Overall \texttt{plh} outperforms the other two methods.

\begin{figure}[!h]
\centering
\subfloat[]{\includegraphics[width = 1.7in]{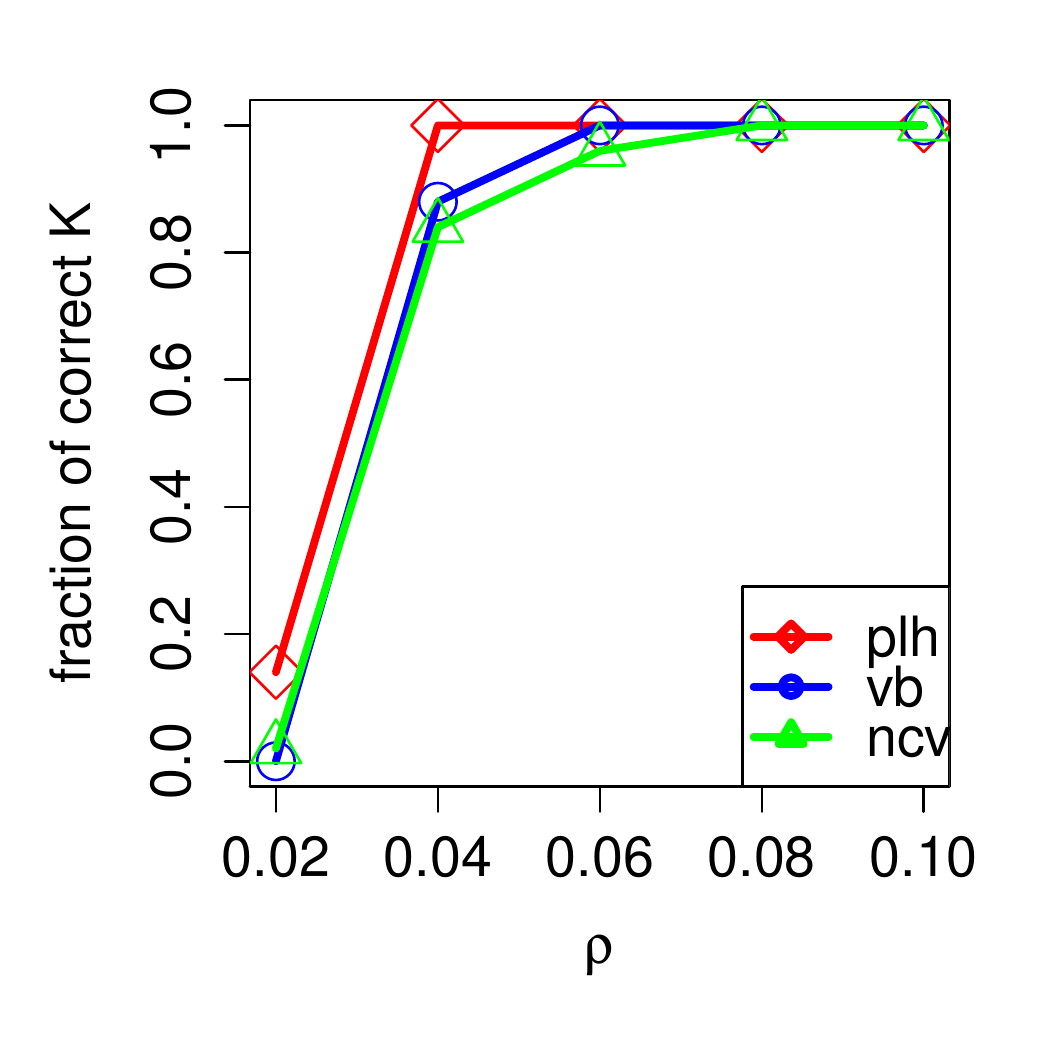}}
\subfloat[]{\includegraphics[width = 1.7in]{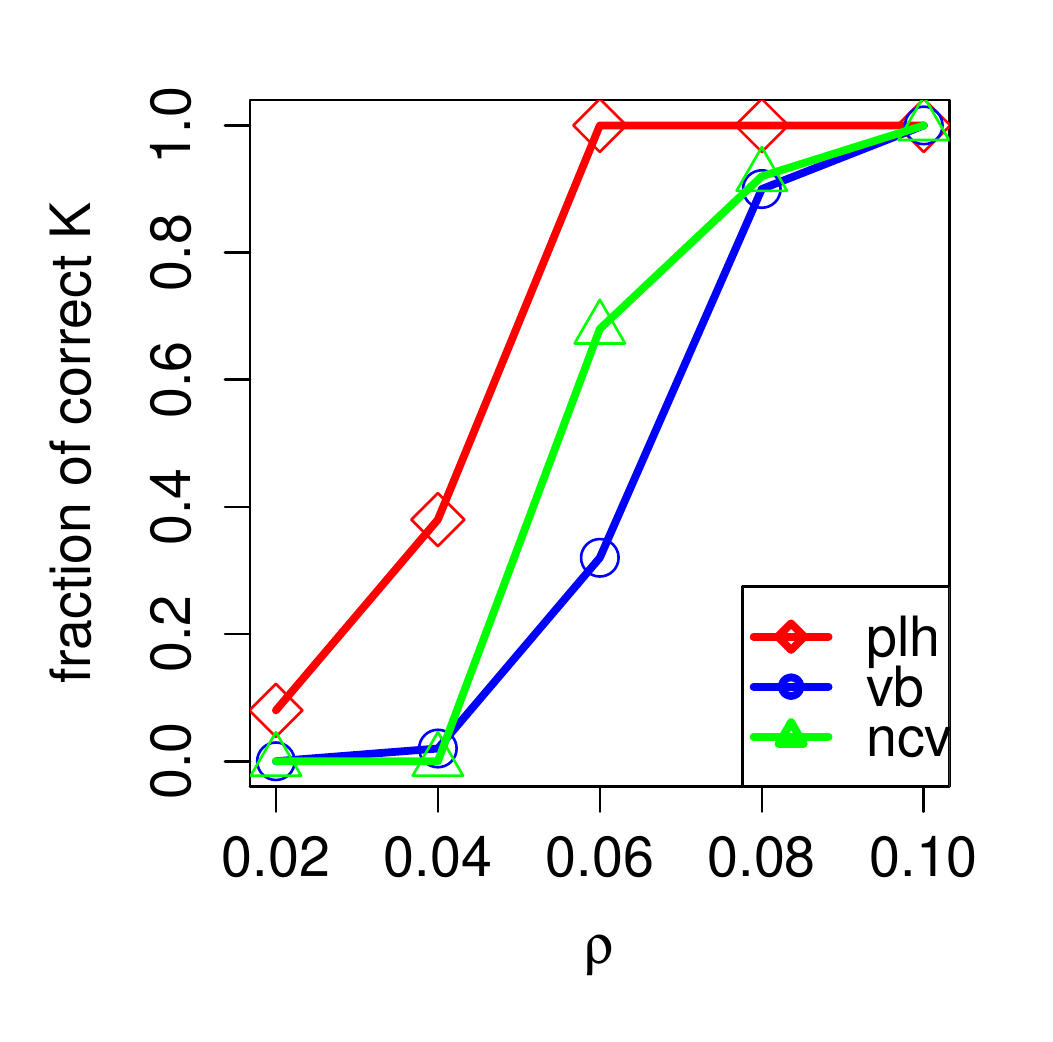}} 
\subfloat[]{\includegraphics[width = 1.7in]{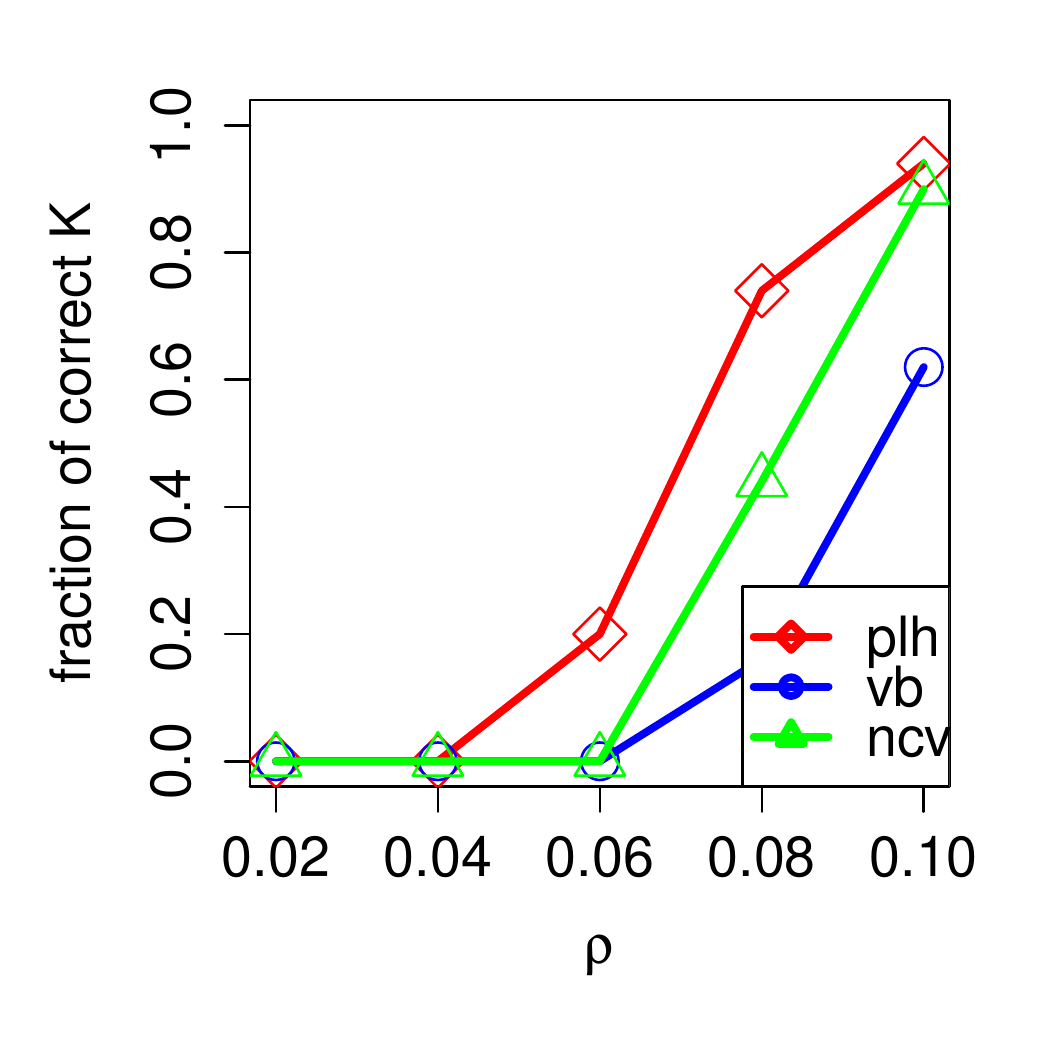}}
\caption{Comparison of the success rates of the penalized likelihood (\texttt{plh}) with variational Bayes (\texttt{vb}) and network cross validation (\texttt{ncv}). For every parameter setting, 50 networks were simulated from regular SBM with (a) $K=2$, $\pi=(0.4, 0.6)$; (b) $K=3$, $\pi=(0.3, 0.3, 0.4)$; (c) $K=4$, $\pi_i=0.25$ for all $i$. In all the cases, $H^* = \rho S^*$, where $\rho\in\{0.02, 0.04, \dots, 0.1\}$, the diagonal elements of $S^*$ equal 2 and the off diagonal elements equal 1.}
\label{fig_comp}
\end{figure}
%Figure \ref{fig_density_separation}(a) shows the distribution of $n^{-3/2}L_{2,n}$ (with 100 replications) when the generative model has $K=2$, $H^*$ as specified in scenario (a) and $\pi^*$ varies with $\pi^*_1 = 0.2, 0.15, 0.1$ and 0. In the last case, the model reduces to an Erd\H{o}s-Renyi graph. As expected, when $\pi^*_1=0$, the scaled statistic converges to 0, and the degenerative distribution is clearly separated from the two-block distributions when $\pi^*_1$ is reasonably large. As $\pi^*_1$ decreases, it becomes harder to discover the two-block structure. We also note that the positive values produced with $\pi^*_1 = 0.1$ were mostly due to the variational algorithm failing to converge. Similarly in Figure \ref{fig_density_separation}(b),  as $\rho$ decreases in scenario (a), it becomes more challenging to distinguish the two-block model from an Erd\H{o}s-Renyi graph.
 
%\begin{figure}[!h]
%\centering
%\subfloat[]{\includegraphics[width = 3in]{figures/K2_pi.pdf}}
%\subfloat[]{\includegraphics[width = 3.05in]{figures/K2_rho.pdf}}
%\caption{}
%\label{fig_density_separation}
%\end{figure}

Next we simulated networks from DCSBM. To test if our method also works for more general DCSBM parameter settings and not limited by the specific Dirichlet prior assumption on $\omega$, we generated degree parameters $\omega$ from a Unif(0.2, 1) distribution and further normalized them so that $\sum_{Z_i=k} \omega_i = n_k(Z)$. We set $H^*=\rho S^*$ as in the previous case with varying $\rho$. Binary adjacency matrices were generated even though our model in Section \ref{subsec_dcsbm} is Poisson. As the simulation confirms, the approximation works well when networks are sparse. Table \ref{tab_dcsbm} shows the average success rates of \texttt{plh} (calculated using the pseudo likelihood algorithm) and \texttt{nvc} for 50 networks of size 800 for each parameter set. Overall the problem is harder in this case than regular SBM as the inclusion of degree parameters induces more sparsity in some regions of the networks. \texttt{plh} shows a significant improvement over \texttt{ncv} in almost all cases. 

\begin{table}[h] 
%\begin{center}
\begin{tabular}{cccccccccc} 
 & \multicolumn{3}{c}{$K=2$} & \multicolumn{3}{c}{$K=3$} & \multicolumn{3}{c}{$K=4$}	\\
$\rho$ & 0.02 & 0.04 & 0.08 & 0.02 & 0.04 & 0.08 & 0.02 & 0.04 & 0.08	\\
\hline
plh	& 0.88 & 0.96 & 1 & 0.08 & 0.66 & 1 & 0.12 & 0.64 & 0.98 	\\
ncv	& 0 & 0.26 & 1 & 0 & 0 & 0.54 & 0 & 0 & 0 	\\
\hline
\end{tabular}
\caption{Comparison of the success rates of the penalized likelihood (\texttt{plh}) with network cross validation (\texttt{ncv}). For every parameter setting, 50 networks were simulated from the DCSBM with (a) $K=2$, $\pi=(0.4, 0.6)$; (b) $K=3$, $\pi=(0.3, 0.3, 0.4)$; (c) $K=4$, $\pi_i=0.25$ for all $i$. In all the cases, $H^* = \rho S^*$, where $\rho\in\{0.02, 0.04, \dots, 0.1\}$, the diagonal elements of $S^*$ equal 2 and the off diagonal elements equal 1. }
\label{tab_dcsbm}
%\end{center} 
\end{table}

\section{Real world networks}
\label{sec_real}
In this section, we examine the performance of our method on real world networks.  We set $K_{\max}=30$ for the Facebook networks and $K_{\max}=15$ for the others. We first implemented our method along with \texttt{vb} and \texttt{ncv} on nine Facebook ego networks, collected and labeled by \citet{leskovec:2012}. An ego network is created by extracting subgraphs formed on the neighbors of a central (ego) node. Any isolated node was removed before analysis. Fitting regular SBM to these networks, variational EM was used to compute \texttt{plh}. The actual sizes of the networks and the number of communities selected by the three methods are shown in Table \ref{tab_facebook}. The third row of the table shows the number of friend circles in every network with some individuals belonging to multiple circles, but not every individual possesses a circle label. These circle numbers give partial truth on how many communities there are in the networks. Overall \texttt{plh} gives estimates closer to the circle numbers when the network is reasonably large and the number of circles is moderate. \texttt{vb} performs better on networks with a large number of circles but also overfits in a few cases. \texttt{ncv} tends to produce smaller community numbers. 

%Overall, the penalized likelihood and \texttt{vb} tend to produce comparable community numbers, whereas \texttt{ncv} consistently favors small community numbers. The penalized likelihood approach is reasonably robust to the choice of $\lambda$ on larger networks. Both the penalized likelihood and \texttt{vb} tend to underestimate the community number on smaller networks with a large number of circles and a small average degree, reflecting the difficulty in fitting a large $K$-block model on small networks. 

\begin{table}[h] 
\begin{center}
\begin{tabular}{clllllllll} 
 \# Non-isolated vertices & 333 & 1034  & 224 & 150 & 168 & 61 & 786 & 534 & 52 \\
 \hline
 Average degree & 15 & 52 & 29 & 23 & 20 & 9 & 36 & 18 & 6	\\
 \# Circles & 24 & 9  & 14 & 7 & 13 & 13 & 17 & 32 & 17  \\
\texttt{plh} & 6  & 7 & 6 & 4 & 6 & 6 & 9 & 9 & 6  \\
 % ($\lambda=1/2$) & 13  & 15  & 13 & 10 & 9 & 8 & 20 & 14 & 6  \\
%($\lambda=1$) & 10 & 15  & 13 & 7 & 9 & 6 & 20 & 14 & 3 \\
\texttt{vb} & 11 & 24 & 16 & 9 & 11 & 6 & 25 & 23 & 6	\\
\texttt{ncv} & 3 & 6 & 4 & 2 & 4 & 2 & 2 & 2 & 3  
%\hline
%$F_1$-measure  ($C=1/4$) & 0.31 & 0.33 & 0.23 & 0.18 & 0.24 & 0.13 & 0.2 & 0.15 & 0.32 \\ 
 %($C=1/2$) & 0.38 & 0.33 & 0.24 & 0.19 & 0.28 & 0.12 & 0.2 & 0.15 & 0.32	\\  
 %($C=1$) & 0.33 & 0.44 & 0.28 & 0.25 & 0.28 & 0.17 & 0.2 & 0.15 & 0.24	
\end{tabular}
\caption{Facebook ego networks and the number of communities selected by the three methods.}
\label{tab_facebook}
\end{center} 
\end{table}

We also implemented these methods on the political book network \citep{polbooks:2006}, which consists of 105 books and their edges representing co-purchase information from Amazon. Again we treated this as a regular SBM and used variational EM to fit \texttt{plh}. Figure \ref{fig_polbooks} (a) shows the manual labeling of the books based on their political orientations being either ``conservative", ``liberal" or ``neutral". As shown in (b), \texttt{plh} estimated $K=6$ and essentially splits each of the communities in (a) into two, suggesting the presence of sub-communities with more uniform degree distributions.  \texttt{vb} found 4 communities but merged two communities in (a) into one. \texttt{ncv} selected $K=2$ and also merged two clusters in (a).

%(b) show the community structures obtained by our method with three choices of $\lambda$. When $\lambda=2$, the method selected $K=3$ with the clustering of the nodes being close to the truth. With the other two smaller $\lambda$ values, the method selected $K=6$ and the clustering further splits each of the communities obtained previously into two. \texttt{vb} found four communities but merged two clusters in (a) into one. \texttt{ncv} again produced the smallest $K$ value with $K=2$. 

\begin{figure}[!h]
\centering
\subfloat[]{\includegraphics[width = 1.3in]{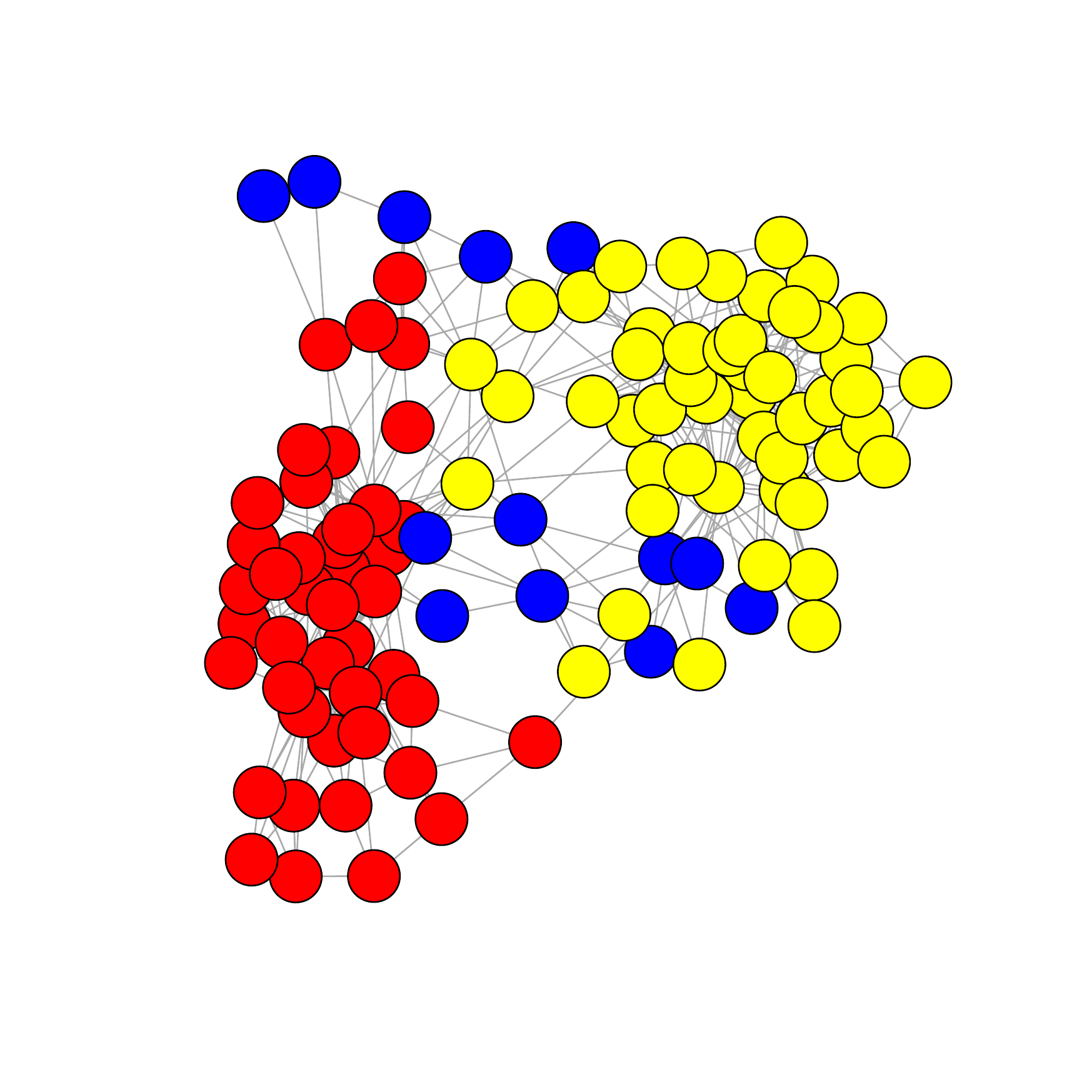}}
%\subfloat[]{\includegraphics[width = 1.3in]{polbooks_K3.pdf}}
\subfloat[]{\includegraphics[width = 1.3in]{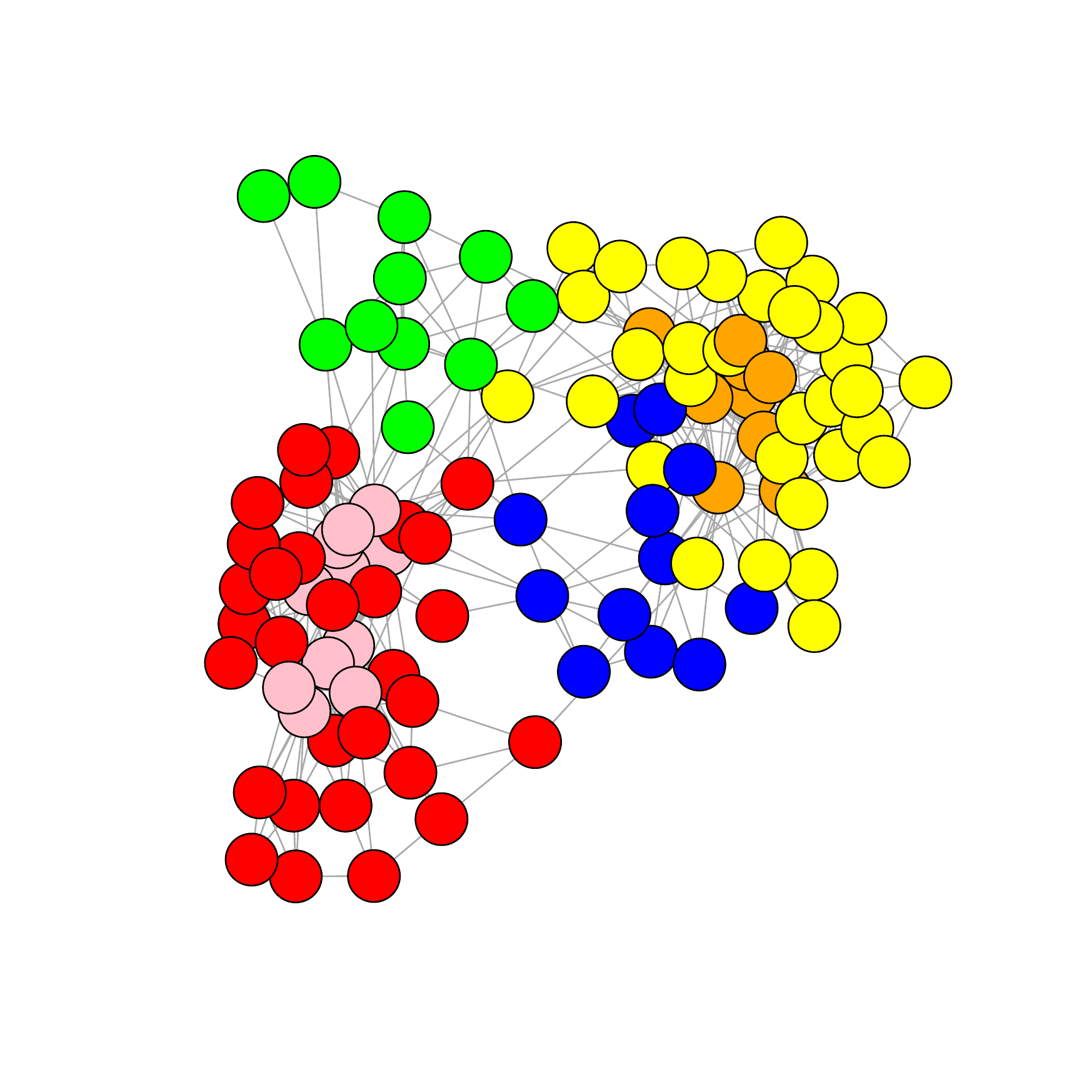}}
\subfloat[]{\includegraphics[width = 1.3in]{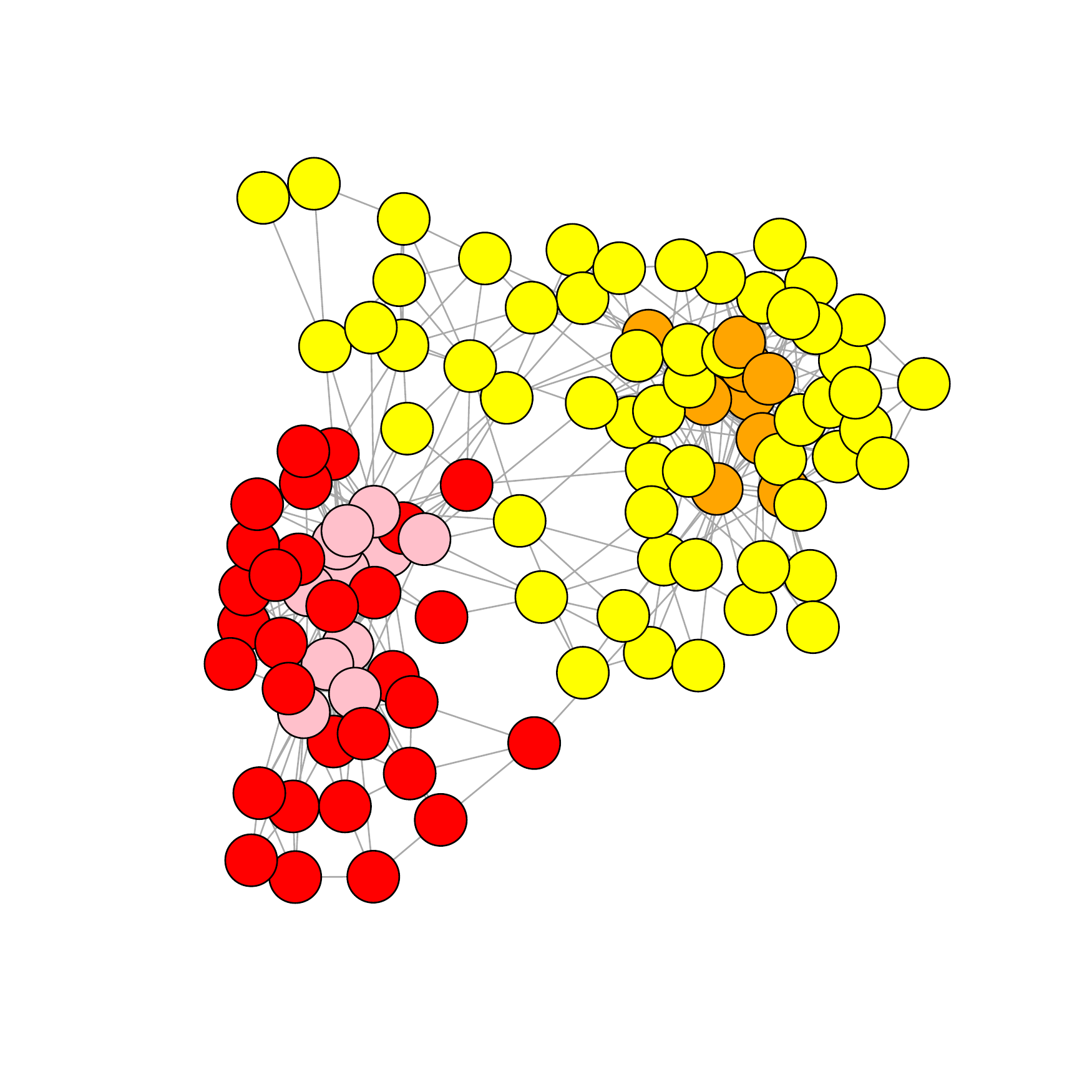}}
\subfloat[]{\includegraphics[width = 1.3in]{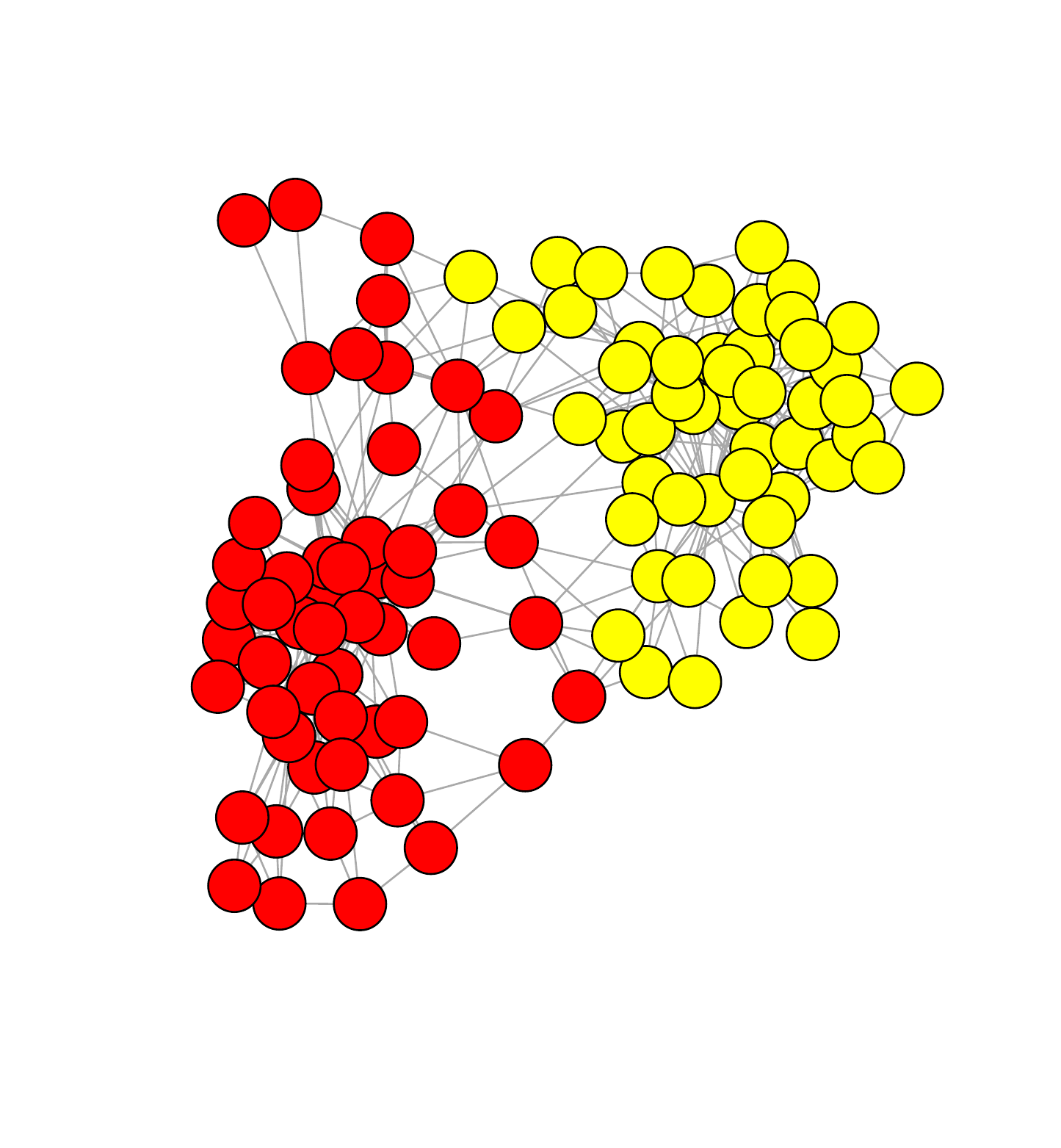}}
\caption{Communities in 105 political books based on (a) manually curated ground truth; (b) penalized likelihood; (c) \texttt{vb}; (d) \texttt{ncv}.}
\label{fig_polbooks}
\end{figure}

Finally we used the political blog network \citep{adamic:2005} as an example of DCSBM. The network consists of blogs on US politics and their web links as edges. Based on whether a blog is ``liberal" or ``conservative", the network is divided into two communities. As is commonly done in the literature, we considered only the largest connected component containing a total of 1222 nodes. In this case, \texttt{plh} selected $K=4$, splitting one of the communities into three as shown in Figure \ref{fig_polblogs}. \texttt{ncv} selected $K=2$. On the other hand, we have also observed in simulations that \texttt{ncv} tends to have a bias toward lower $K$ for DCSBM. 

\begin{figure}[!h]
\centering
\subfloat[]{\includegraphics[width = 2.3in]{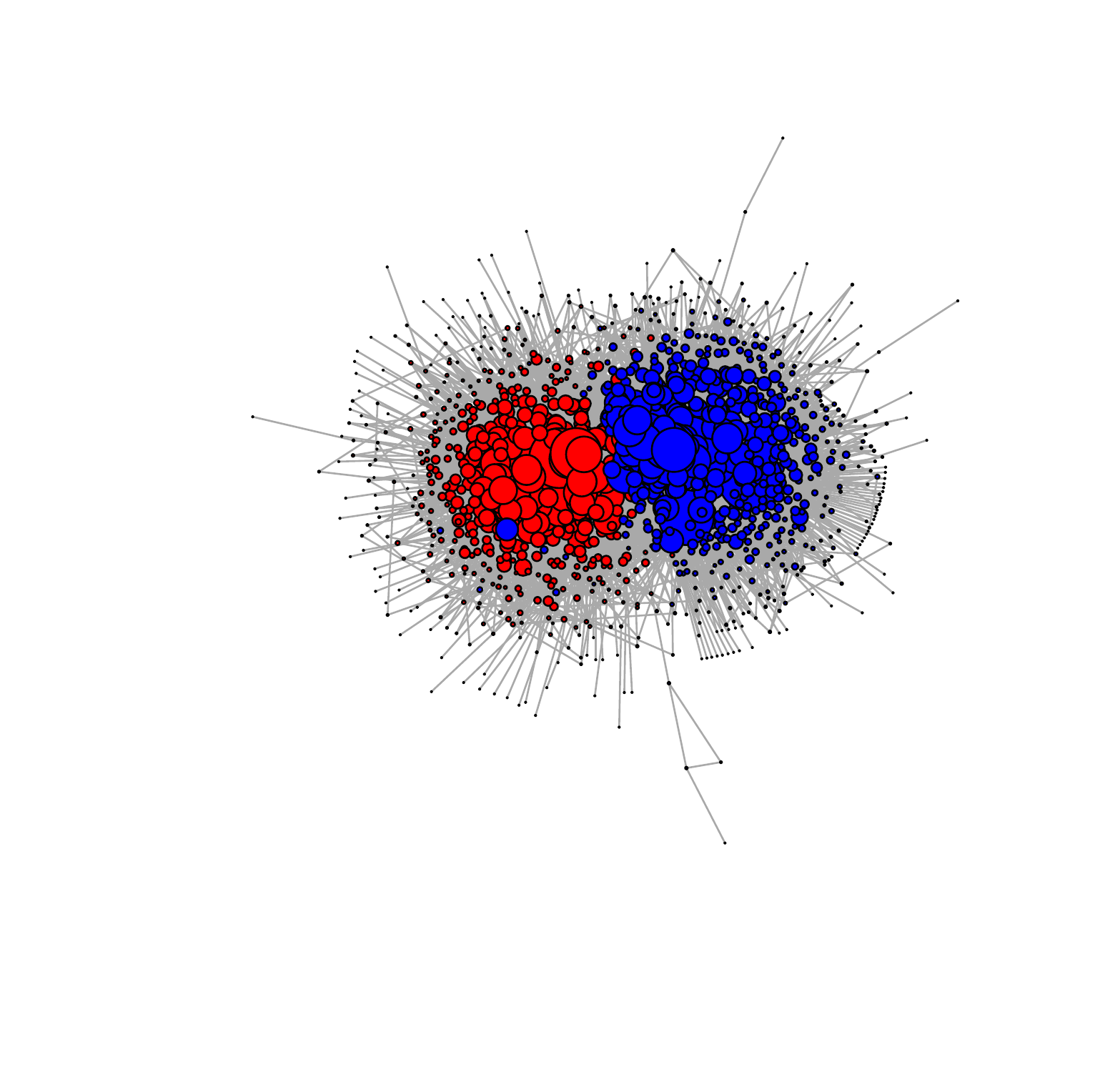}}
%\subfloat[]{\includegraphics[width = 1.3in]{polbooks_K3.pdf}}
\subfloat[]{\includegraphics[width = 2.3in]{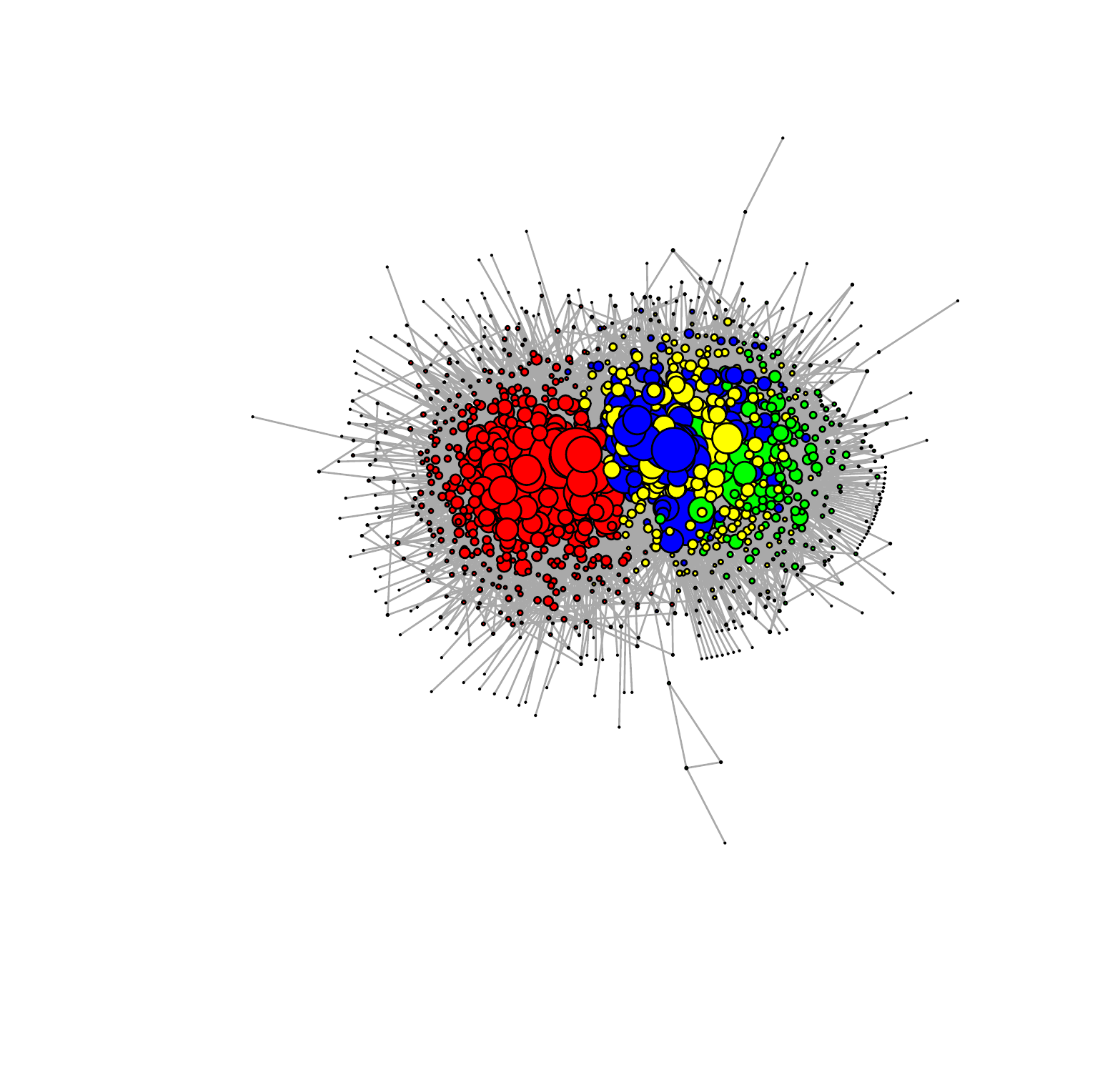}}
\caption{Political blog network (a) manually labeled ground truth; (b) penalized likelihood.}
\label{fig_polblogs}
\end{figure}

\section{Discussion}
In this paper, we have studied the problem of selecting the community number under both regular SBM and DCSBM, allowing the average degree to grow at a polylog rate and the true block number being fixed. We have shown the log likelihood ratio statistic has an asymptotic normal distribution when a smaller model with fewer blocks is specified. In the case of misspecifying a larger model, we have obtained the convergence rate for the statistic. Combining these results we arrive at a likelihood-based model selection criterion that is asymptotically consistent. For finite-sized networks, we have further refined the bound for the statistic in the overfitting case under reasonable assumptions to correct for the possibility of over-penalizing. Our method shows better performance than \texttt{vb} and \texttt{ncv} on simulated data and produces sensible results on a range of real world networks. We also note that \texttt{vb} is only available for regular SBM and \texttt{ncv} tends to be highly varying from run to run due to its use of random partitions. 

There are a number of open problems for future work. (i) It would be interesting to investigate whether the results can be extended to other block model variants, such as  overlapping SBM \citep{Airoldi:2008, ball:2011}. (ii) We have performed our analysis with fixed block number as the number of nodes tends to infinity. However, in practice the number of communities is also likely to grow as a network expands \citep{choi:2012}, especially when we view block models as histogram approximations for more general models \citep{BickelChen:2009, wolfe:2013}. \citet{peixoto:2013} has provided some analysis on the maximum number of blocks detectable for a given SBM graph with fixed labels. In general as more time-course network data become available in biology, social science, and many other domains, incorporating dynamic features of community structures into network modeling will remain an interesting direction to explore. 

%%%%%%%%%%%%%%%%%%%%%%%%%%%%%%%%%%%%
%%%%%%%%%%%%%%%%%%%%%%%%%%%%%%%%%%%%
%%%%%%%%%%%%%%%%%%%%%%%%%%%%%%%%%%%%

\appendix
\section{Proofs of lemmas and theorems}
In this section, we prove all the lemmas and theorems in the main paper. Denote $\mu_n = n^2\rho_n$, the total number of edges $L  =  \sum_{i=1}^{n}\sum_{j=i+1}^{n}A_{i,j}$, and $N(z) = (n_{k,l}(z))_{1\leq k,l \leq K'}$. For two sets of labels $z$ and $y$, $|z -y| = \sum_{i=1}^{n} \I(z_i \neq y_i)$. $\Vert \cdot \Vert_{\infty}$ denotes the maximum norm of a matrix. We abbreviate $R(z, Z) S^* R(z, Z)^T$ as $RS^*R^T(z)$. $C, C_1, \dots$ are constants which might be different at each occurrence. The following concentration inequalities bound the variations in $A$ and will be used throughout the section.

\begin{lemma}
Suppose $z\in[K']^n$ and define $X(z) = O(z)/\mu_n - RS^*R^T(z)$. For $\epsilon\leq 3$,
\begin{equation}
\P\left( \max_{z\in[K']^n} \left\Vert X(z) \right\Vert_{\infty} \geq \epsilon\right) \leq 2(K')^{n+2} \exp\left( -C_1(S^*)\epsilon^2\mu_n \right).
\label{eq_bound1}
\end{equation}
Let $y\in [K']^n$ be a fixed set of labels, then for $\epsilon\leq 3m/n$,  
\begin{align}
& \P\left( \max_{z: |z-y| \leq m} \left\Vert X(z) -  X(y) \right\Vert_{\infty} > \epsilon \right)		\notag\\
\leq & 2\binom{n}{m} (K')^{m+2}\exp\left(- C_2(S^*)\frac{n\epsilon^2 \mu_n}{m} \right).
\label{eq_bound2}
\end{align}
$C_1(S^*)$ and $C_2(S^*)$ are constants depending only on $S^*$.
\label{lem_concentration}
\end{lemma}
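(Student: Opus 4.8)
The plan is to prove both displays by a Bernstein-type tail bound applied entrywise, followed by a union bound over the relevant label sets. We work conditionally on the true labels $Z$; since the resulting bounds will be uniform over $Z$, they pass to the unconditional probability. Fix $z\in[K']^n$ and an entry $(a,b)$. Using the symmetry of $A$, $O_{a,b}(z)=\sum_{i\ne j}\I(z_i=a,z_j=b)A_{i,j}$ can be written as a sum over unordered pairs $i<j$ of conditionally independent terms $c^{ab}_{ij}A_{i,j}$ with $c^{ab}_{ij}\in\{0,1,2\}$, at most $\binom n2$ of them nonzero, each with conditional variance at most $4\rho_n\Vert S^*\Vert_\infty$. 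A direct computation gives $\mu_n[RS^*R^T(z)]_{a,b}=\rho_n\sum_{i,j}\I(z_i=a,z_j=b)S^*_{Z_i,Z_j}$, which equals $\E[O_{a,b}(z)\mid Z]$ plus the deterministic diagonal discrepancy $\rho_n\I(a=b)\sum_{i:z_i=a}S^*_{Z_i,Z_i}$, of magnitude at most $n\rho_n\Vert S^*\Vert_\infty$, i.e.\ at most $\Vert S^*\Vert_\infty/n$ after dividing by $\mu_n$.

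For \eqref{eq_bound1}: if $\epsilon<2\Vert S^*\Vert_\infty/n$ then $\epsilon^2\mu_n=\epsilon^2n^2\rho_n=O(\rho_n)=O(1)$ (recall $\rho_n\Vert S^*\Vert_\infty<1$), so the claimed right-hand side already exceeds $1$ and there is nothing to prove. Otherwise $\{|X_{a,b}(z)|\ge\epsilon\}$ forces $|O_{a,b}(z)-\E[O_{a,b}(z)\mid Z]|\ge\tfrac12\epsilon\mu_n$, and Bernstein's inequality with total variance at most $2\Vert S^*\Vert_\infty\mu_n$ and summand bound $2$ gives a tail $2\exp\bigl(-c\,\epsilon^2\mu_n^2/(\Vert S^*\Vert_\infty\mu_n+\epsilon\mu_n)\bigr)$; the hypothesis $\epsilon\le 3$ is exactly what lets us bound the linear term $\epsilon\mu_n$ in the denominator by the variance term, collapsing the exponent to $C_1(S^*)\epsilon^2\mu_n$ with $C_1(S^*)$ depending only on $\Vert S^*\Vert_\infty$. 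Summing over the $(K')^n$ choices of $z$ and $(K')^2$ choices of $(a,b)$ yields \eqref{eq_bound1}.

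For \eqref{eq_bound2}: fix $y$ and set $D=\{i:z_i\ne y_i\}$, $|D|\le m$. The coefficient differences $c^{ab}_{ij}(z)-c^{ab}_{ij}(y)$ vanish unless $i\in D$ or $j\in D$, so $O_{a,b}(z)-O_{a,b}(y)$ is a conditionally independent sum of at most $mn$ terms bounded by $2$, with total variance at most $4mn\rho_n\Vert S^*\Vert_\infty$; the corresponding diagonal discrepancy between $\mu_n\bigl([RS^*R^T(z)]_{a,b}-[RS^*R^T(y)]_{a,b}\bigr)$ and $\E[O_{a,b}(z)-O_{a,b}(y)\mid Z]$ is $O(m\rho_n)$, i.e.\ $O(m/n^2)$ after dividing by $\mu_n$, which is again negligible or renders the bound trivial by the same dichotomy as above. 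Bernstein with $t=\tfrac12\epsilon\mu_n$, variance of order $mn\rho_n$ and summand bound $2$ produces an exponent of order $\epsilon^2\mu_n^2/(mn\rho_n+\epsilon\mu_n)=\epsilon^2n^3\rho_n/(m+\epsilon n)$ up to constants, and here the hypothesis $\epsilon\le 3m/n$ is precisely what absorbs $\epsilon n$ into $m$, giving an exponent at least $C_2(S^*)\,\epsilon^2n^3\rho_n/m=C_2(S^*)\,n\epsilon^2\mu_n/m$ since $n^3\rho_n=n\mu_n$. Finally $\{z:|z-y|\le m\}$ is covered by the $\binom nm$ choices of the support $D$ together with the $(K')^m$ label assignments on $D$, so a union bound over these and over the $(K')^2$ entries $(a,b)$ gives \eqref{eq_bound2}.

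I expect no serious obstacle here: the argument is a textbook Bernstein-plus-union-bound. The two places requiring care are (i) the bookkeeping for the symmetric, diagonal-free adjacency matrix and for the $i=j$ discrepancy between $\E[O(z)\mid Z]$ and $\mu_n\,RS^*R^T(z)$, which must be shown lower order (or trivializing) in the non-degenerate range of $\epsilon$; and (ii) verifying that the two stated constraints $\epsilon\le 3$ and $\epsilon\le 3m/n$ are exactly the conditions under which the Bernstein linear term is dominated by the variance term, so that the exponents take the advertised form — this is the step that breaks if the constants are tracked carelessly.
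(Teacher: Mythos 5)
Your Bernstein-plus-union-bound argument, including the $\binom{n}{m}(K')^{m}$ covering of $\{z:|z-y|\le m\}$ and the treatment of the zero-diagonal discrepancy between $\E[O(z)\mid Z]$ and $\mu_n RS^*R^T(z)$, is correct and is essentially the same proof as the paper's: the paper simply cites \cite{BickelChen:2009}, whose argument is exactly this concentration-plus-union-bound scheme, ``with minor modifications for general $K'$-block models and correcting for the zero diagonal in $A$'' --- precisely the details you supply. Your bookkeeping of how $\epsilon\le 3$ and $\epsilon\le 3m/n$ absorb the Bernstein linear term, and the dichotomy making the diagonal correction either negligible or the bound trivial, are sound.
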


\begin{proof}
The proof follows from \cite{BickelChen:2009} with minor modifications for general $K'$-block models and correcting for the zero diagonal in $A$.
\end{proof}

Recall that
\begin{align*}
\gamma_1(x) & = x\log x + (1-x)\log(1-x),	\\
\gamma_2(x) & = x\log x - x.
\end{align*}
Define $F_i(M,t)$, $i=1,2$, as 
\begin{align}
F_i(M, t) = \sum_{k,l=1}^{K'} t_{k,l} \gamma_i\left( \frac{M_{k,l}}{t_{k,l}} \right), 
\end{align}
Then the log of the complete likelihood can be expressed as 
\begin{align}
 & \sup_{\theta\in\Theta_{K'}} \log f(z, A; \theta) 	= n\sum_{k=1}^{K'} \alpha(n_k(z)/n) + \frac{n^2}{2}F_1\left( O(z)/n^2, N(z)/n^2 \right),		\notag\\	
\end{align} 
 where $\alpha(x) = x\log(x)$. Noting the first term is of smaller order compared to the second term, and the conditional expectation of the argument in $\gamma_1$ given $Z$ is $[RH^*R^T(z)]_{k,l}/[R\1\1^T R^T(z) ]_{k,l}$ and $[RS^*R^T(z)]_{k,l} / [R\1\1^T R^T(z) ]_{k,l}$ for $\gamma_2$ (up to a diagonal difference) with fluctuation bounded by Lemma \ref{lem_concentration}, we will focus on analyzing the conditional expectation 
 \begin{align}
 G_1(R(z), S^*) & = \sum_{k,l=1}^{K'} [R\1\1^T R^T(z)]_{k,l} \gamma_1\left( \frac{[RH^*R^T(z)]_{k,l}}{[R\1\1^T R^T(z) ]_{k,l}} \right)	\quad \text{ for } \rho_n = \Omega(1),		\\
 G_2(R(z), H^*) & = \sum_{k,l=1}^{K'} [R\1\1^T R^T(z)]_{k,l} \gamma_2\left( \frac{[RS^*R^T(z)]_{k,l}}{[R\1\1^T R^T(z) ]_{k,l}} \right)	\quad \text{ for } \rho_n \to 0.
 \end{align}
 The following lemma shows in the case of underfitting a $(K-1)$-block model, to maximize $G_i$ over different configurations of $R(z, Z)$ with given $Z$, it suffices to consider the merging scheme described in Section \ref{sec_under} by combining two existing blocks in $Z$. 
 
 %%%%%%%%%%%%%%%%%%%%%%%%%%%%%%%%%%%
 
 \begin{lemma}
\label{lem_kl_bound}
Given the true labels $Z$ with block proportions $p=n(Z)/n$, maximizing the function $G_1(R(z), H^*)$ over $R$ achieves its maximum in the label set 
\[
\{z\in[K-1]^n \mid \text{ there exists $\tau$ such that } \tau(z) =U_{a,b}(Z), 1\leq a < b \leq K, \}
\]
where $U_{a,b}$ merges $Z_i$ with labels $a$ and $b$. 

Furthermore, suppose $z_0$ gives the unique maximum (up to permutation $\tau$), for all $R$ such that $R \geq 0, R^T\1= p$, 
\begin{equation}
\left. \frac{\partial G_1((1-\epsilon)R(z_0) + \epsilon R, H^*)}{\partial \epsilon} \right\vert_{\epsilon=0+} < -C <0
\label{eq_neg_deriv}
\end{equation}
for $\rho_n = \Omega(1)$. The same conclusions hold for $G_2(R(z), S^*)$.
\end{lemma}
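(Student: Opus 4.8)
The plan is to work directly with the confusion matrix $R = R(z,Z) \in \R^{(K-1)\times K}$, exploiting the identities $[R\1\1^T R^T]_{k,l} = r_k r_l$ with $r_k := \sum_a R_{k,a}$, and $[RH^*R^T]_{k,l}/(r_k r_l) = \sum_{a,b} Q_{k,a} Q_{l,b} H^*_{a,b}$ for the row-stochastic matrix $Q_{k,a} := R_{k,a}/r_k$, so that $G_1(R,H^*) = \sum_{k,l} r_k r_l\,\gamma_1\big([QH^*Q^T]_{k,l}\big)$. It is convenient to read $R$ as the joint law of a pair $(W,V)\in[K-1]\times[K]$ whose $V$-marginal is the fixed vector $p$, so that $G_1 = \E\big[\gamma_1\big(\E[H^*_{V,V'}\mid W,W']\big)\big]$ for an independent copy $(W',V')$ of $(W,V)$. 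The labelings $z\in[K-1]^n$ produce exactly the points of $\mathcal P_p = \{R\ge 0 : R^T\1 = p\}$ with entries in $\tfrac1n\mathbb Z$, and $\mathcal P_p$ is a product of scaled simplices (one per column of $R$) whose vertices are the deterministic merge maps $R_{k,a} = p_a\I(\phi(a)=k)$, $\phi:[K]\to[K-1]$ — which, since $p_a\in\tfrac1n\mathbb Z$, are themselves realized by labelings. So it suffices to show that $G_1$ attains its maximum over $\mathcal P_p$ at a vertex, and that this vertex is a two-block merge of $Z$.

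Two ingredients carry the argument. The first, elementary one is \emph{refinement monotonicity}: if one coarsening of $Z$ is a deterministic function of a finer one, then the tower property together with conditional Jensen for the convex $\gamma_1$ gives $\gamma_1(\E[H^*_{V,V'}\mid W,W']) \le \E[\gamma_1(\E[H^*_{V,V'}\mid W'',W'''])\mid W,W']$, and taking expectations shows refining never decreases $G_1$; hence among deterministic merge maps those using all $K-1$ labels — exactly the merges $U_{a,b}(Z)$ — dominate, and a direct computation identifies $G_i(R(U_{a,b}(Z)))$ with $D_i(a,b)$ (with $p$ in place of $\pi^*$), so Assumption~\ref{assump_unique} singles out a unique optimal vertex. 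The second ingredient, and the technical heart, is that $G_1$ is \emph{convex along every exchange line} $R(s) = R + s(e_k - e_l)e_a^T$ that transfers mass within a column $a$ between rows $k$ and $l$. Granting this, any maximizer $R^*$ of $G_1$ over $\mathcal P_p$ can be pushed to a vertex: if some column of $R^*$ had two positive entries, then $s=0$ is interior to the corresponding exchange segment, and the restriction of $G_1$ to that segment would be convex with an interior maximizer, hence constant there, so the endpoint — with one fewer positive entry in that column — is still a maximizer; iterating over columns reaches a vertex maximizer, which by refinement monotonicity and Assumption~\ref{assump_unique} lies exactly on the permutation class of some $U_{a,b}(Z)$.

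For the ``furthermore'' assertion, observe that the edges of $\mathcal P_p$ issuing from the optimal vertex $R(z_0)$ are themselves exchange lines, so $G_1$ restricted to each such edge is convex; since the maximal set is the finite permutation class of $R(z_0)$, this restriction cannot be constant on any edge, and a convex $f$ on $[0,1]$ with $f(0)=\max f$ and $f\not\equiv f(0)$ satisfies $f'(0^+)\le f(1)-f(0)<0$, so every edge direction at $R(z_0)$ is a strict descent direction for $G_1$. Because $\mathcal P_p$ is a simple polytope, every feasible direction at $R(z_0)$ is a nonnegative combination of its edge directions, hence $\langle\nabla G_1(R(z_0)),R-R(z_0)\rangle = \left.\tfrac{\partial}{\partial\epsilon}G_1((1-\epsilon)R(z_0)+\epsilon R,H^*)\right|_{\epsilon=0+} < 0$ for every $R\in\mathcal P_p$ with $R\ne R(z_0)$; compactness over $\{R\in\mathcal P_p : \|R-R(z_0)\|\ge\delta\}$ then gives the uniform bound $-C<0$ of \eqref{eq_neg_deriv}. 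The argument for $G_2(R,S^*)$ is verbatim the same, using that $\gamma_2$ is also convex ($\gamma_2''(x)=1/x>0$) and replacing $H^*$ by $S^*$ throughout.

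The main obstacle is establishing the claimed convexity of $G_1$ along exchange lines. Along such a line both $r_k r_l$ and $[RH^*R^T]_{k,l}$ become (at worst) quadratic in $s$, so the naive composition of the jointly convex perspective $x\gamma_1(y/x)$ with these quadratics is not manifestly convex — in particular the single summand indexed by $(k,k)$ need not be, since its first argument $r_k(s)^2$ is convex while $x\gamma_1(y/x)$ is decreasing in $x$. The resolution should be to group the four ``quadratic'' summands indexed by $(k,k),(l,l),(k,l),(l,k)$ and exploit the cancellations between them (equivalently, to reduce to the $2\times K$ subproblem on rows $k$ and $l$ with the other rows frozen), so that their sum — added to the manifestly convex ``linear'' summands involving the frozen rows — is convex in $s$; this, together with the bookkeeping needed to accommodate the zero diagonal of $A$, is where I expect essentially all the work to lie, the remainder being convex geometry and the one-line Jensen step.
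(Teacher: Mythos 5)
Your high-level route is the same as the paper's: view $G_1$ (resp.\ $G_2$) as a function of the confusion matrix on the polytope $\{R\ge 0,\ R^T\1=p\}$, use a convexity property to push the maximum to a vertex, identify vertices with deterministic coarsenings $\phi:[K]\to[K-1]$, use strict convexity of $\gamma_i$ together with identifiability (Assumption \ref{assump_iden}) to see that only pairwise merges $U_{a,b}(Z)$ can be optimal, and then use uniqueness (Assumption \ref{assump_unique}) to get the strictly negative directional derivative \eqref{eq_neg_deriv}. The paper does this by asserting that $G_1(\cdot,H^*)$, viewed as a function of the $(K-1)\times K$ vector $R$, is convex on the polytope, so the maximum is attained at a vertex; your refinement-monotonicity (conditional Jensen) step, your identification of $G_i(R(U_{a,b}(Z)))$ with $D_i(a,b)$ (with $p$ in place of $\pi^*$), and your treatment of the ``furthermore'' part via edge directions of the (simple) product-of-simplices polytope are all consistent with, and in places more careful than, the paper's two-line justification of \eqref{eq_neg_deriv} (your restriction to $R$ bounded away from $R(z_0)$ is in fact needed for a uniform constant $C$, since the $\epsilon$-parametrized derivative vanishes as $R\to R(z_0)$).

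The genuine gap is that the one ingredient the whole argument stands on is not proved: you explicitly defer the convexity of $G_1$ along exchange lines (``this is where I expect essentially all the work to lie''), offering only a plan to group the four summands indexed by the two affected rows and hope for cancellations, without carrying it out. Without that directional convexity your push-to-a-vertex step, the non-constancy argument on edges, and hence \eqref{eq_neg_deriv} all collapse, so the proposal is incomplete exactly at the lemma's crux. Note that the paper's proof uses the stronger statement that $G_1$ is globally convex in $R$ (stated as ``easy to check''), from which your exchange-line convexity would follow trivially by restriction to a line; so the work you have postponed is precisely the verification the paper compresses into that one sentence, and your correct observation that the perspective function composed with quadratic maps of $R$ is not manifestly convex (the $(k,k)$ summand alone need not be convex) shows this verification is not automatic and must actually be supplied, e.g.\ by the grouping computation you sketch, before your proof (or a fully rigorous version of the paper's) is complete.
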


\begin{proof}
Treating $R$ as a $(K-1)\times K$-dimensional vector, it is easy to check $G_1(\cdot, H^*)$ is a convex function. Furthermore, since $R\geq 0$, $R^T\1 = p$, the domain is part of a convex polyhedron $P_R = \{R\in\R^{K(K-1)} \mid R \geq 0, R^T\1 = p \}$. Therefore the maximum is attained at the vertices of $P_R$, that is $R^{vert}$ such that for every $a$, exactly one $R^{vert}_{k,a}$, $(1\leq k \leq K-1)$ is nonzero. This is equivalent to assigning all $Z_i\in[K]$ with the same label into one group with a new label in $[K-1]$. Let $u: [K] \longrightarrow [K-1]$ be the function specified by $R^{vert}$, then 
\begin{equation}
G_1(R^{vert}, H^*) = \sum_{k,l} \sum_{\substack{a \in u^{-1}(k),\\ b\in u^{-1}(l)}} p_a p_b  \gamma_1 \left( \frac{\sum_{\substack{a \in u^{-1}(k),\\ b\in u^{-1}(l)}} H^*_{a,b} p_a p_b}{\sum_{\substack{a \in u^{-1}(k),\\ b\in u^{-1}(l)}} p_a p_b} \right).
\end{equation}
Note that there exists at least one $l\in [K-1]$ such that $|\{u^{-1}(l)\}| > 1$, and $\{ u^{-1}(k), k\in [K-1] \}$ forms a partition on $[K]$. By strict convexity of $\gamma_1$ and identifiability of $H^*$, to maximize $G_1$ it suffices to consider merging two of the labels in $[K]$ and mapping the other labels to the remaining labels in $[K-1]$ in a one-to-one relationship. 

The second part of the lemma holds since it is easy to see when the maximum is unique, the derivative of the $G_1$ at the optimal vertex is bounded away from 0 in all directions. The same arguments apply to $G_2$. 
\end{proof}

Noting that when $p=\pi^*$, $G_i$ evaluated at $R(U_{a,b}(Z))$ is equal to $D_i$ defined in \eqref{eq_D}, it is easy to see Assumptions \ref{assump_unique} and \ref{assump_iden} guarantees the maximum is unique. We will now prove Lemma \ref{lem_bounds}. 

\begin{proof}[Proof of Lemma \ref{lem_bounds}]
Taking the log of the complete likelihood, 
\begin{align}
 & \sup_{\theta\in\Theta_{K-1}} \log f(z, A; \theta) 	\notag\\
= & n\sum_{k=1}^{K-1} \alpha(n_k(z)/n) + \frac{n^2}{2}F_1\left( O(z)/n^2, N(z)/n^2 \right).		\notag\\	
\end{align}
By concentration of $p_k$, it suffices to consider $\{ \Vert p - \pi^* \Vert_{\infty} <\eta \}$, where $\eta$ is small enough that $Z'$ remains the unique maximizer of $G_1(R(z), H^*)$ and $G_2(R(z), S^*)$, and distribution conditional on $Z$.

Using techniques similar to \cite{bickel:2013}, we prove this by considering $z$ far away from $Z'$ and close to $Z'$ (up to permutation $\tau$). Let 
%$\bar{z} = \arg\min_{\tau(z)} |\tau(z) - Z'|$ and 
$\delta_n$ be a sequence converging to 0 slowly. Define
\[
I_{\delta_n} = \{ z\in[K-1]^n : G_1(R(z), H^*) - G_1(R(Z'), H^*) < -\delta_n \}.
\]
First by \eqref{eq_bound1} in Lemma \ref{lem_concentration}, for $\epsilon_n \to 0$ slowly,
\begin{align}
 & \left\vert F_1\left( O(z)/n^2, N(z)/n^2 \right) - G_1(R(z), H^*) \right\vert	\notag\\
\leq & C \cdot \sum_{k,l} \left\vert O_{k,l}(z)/n^2 - (RH^*R^T(z))_{k,l} \right\vert + O(n^{-1})	\notag\\
= & o_P(\epsilon_n)
\end{align}
since $\gamma_1$ is Lipschitz on any interval bounded away from 0 and 1 and $\min H^* = \Omega(1)$. For $z\in I_{\delta_n}$ and $\rho_n = \Omega(1)$, 
\begin{align}
 \sum_{z\in I_{\delta_n}} \sup_{\theta\in\Theta_{K-1}} e^{\log f(z, A; \theta)} & \leq \sup_{\theta\in\Theta_{K-1}} f(Z', A; \theta)  (K-1)^n e^{O(n) + o_P(n^2\epsilon_n) - n^2\delta_n}		\notag\\
& =  \sup_{\theta\in\Theta_{K-1}} f(Z', A; \theta) o_P(1)		
\end{align}
choosing $\delta_n \to 0$ slowly enough such that $\delta_n / \epsilon_n \to \infty$. Similarly for $\rho_n\to 0$, define
\[
J_{\delta_n} = \{ z\in[K-1]^n : G_2(R(z), S^*) - G_2(R(Z'), S^*) < -\delta_n \}.
\]
Note that in this case, for $\epsilon_n \to 0$ slowly, 
\begin{align}
& F_1\left( O(z)/n^2, N(z)/n^2 \right) 		\notag\\
= & 2\log\rho_n L/n^2 + \rho_n F_2 \left( O(z)/\mu_n, N(z)/n^2 \right) + O_P(\rho_n^2)		\notag\\
 = & 2\log\rho_n L/n^2 + \rho_n G_2 \left( R(z), S^* \right) + o_P(\rho_n\epsilon_n) + O_P(\rho_n^2),
 \label{eq_part1_fixed}
\end{align}
by \eqref{eq_bound1} and the fact that $\gamma_2$ is Lipschitz on any interval bounded away from 0 and 1 and $\min S^*_{a,b} >0$. Then for $z\in J_{\delta_n}$,
\begin{align}
 & \sum_{z\in J_{\delta_n}} \sup_{\theta\in\Theta_{K-1}} e^{\log f(z, A; \theta)}		\notag\\
 \leq & \sup_{\theta\in\Theta_{K-1}}f(Z', A; \theta) (K-1)^n e^{O(n)+ O_P(\mu_n\rho_n) + o_P(\mu_n\epsilon_n)-\mu_n\delta_n}	\notag\\
 = & \sup_{\theta\in\Theta_{K-1}}f(Z', A; \theta) o_P(1).
 \label{eq_part1_small}
\end{align}
choosing $\epsilon_n \to 0$, $\delta_n \to 0$ slowly enough. 

For $z\notin J_{\delta_n}$, $| G_2 (R(z), S^*) - G_2(R(Z'), S^*) | \to 0$. Let $\bar{z} = \min_{\tau} |\tau(z)-Z'|$. Since the maximum is unique up to $\tau$, $\Vert R(\bar{z}) - R(Z')\Vert_{\infty} \to0$ and $\left\vert \sum_{k} \alpha(n_k(\bar{z})/n) - \sum_{k} \alpha(n_k(Z')/n) \right\vert \to 0$.  

By \eqref{eq_bound2},  
\begin{align}
 & \P\left( \max_{z\notin\S(Z')} \Vert X(\bar{z}) - X(Z') \Vert_{\infty} > \epsilon|\bar{z}-Z'|/n \right)		\notag\\
\leq & \sum_{m=1}^{n} \P\left( \max_{z: z=\bar{z}, |\bar{z} - Z'| = m} \Vert X(z) - X(Z') \Vert_{\infty} >  \epsilon\frac{m}{n} \right)		\notag\\
\leq & \sum_{m=1}^{n} 2(K-1)^{K-1}n^m (K-1)^{m+2} \exp\left(- C \frac{m\mu_n}{n} \right) 	\to0.
\label{eq_x}
\end{align}
It follows for $|\bar{z} - Z'| = m$, $z\notin J_{\delta_n}$,
\begin{align}
\left\Vert \frac{O(\bar{z})}{\mu_n} - \frac{O(Z')}{\mu_n} \right\Vert _{\infty} & = o_P(1) \frac{|\bar{z} - Z'|}{n}	+ \Vert RS^*R^T(\bar{z}) - RS^*R^T(Z') \Vert_{\infty}		 \notag\\
& \geq \frac{m}{n}(C+o_P(1)).
\end{align}
Observe $\Vert O(Z')/\mu_n - RS^*R(Z') \Vert_{\infty} = o_P(1)$ by Lemma \ref{lem_concentration}, $N(Z')/n^2 = R\1\1^TR^T(Z') + o(1)$ on $\{ \Vert p - \pi^* \Vert_{\infty} <\eta \}$, and $F_2(\cdot, \cdot)$ has continuous derivative in the neighborhood of $(O(Z')/\mu_n, N(Z')/n^2)$. Using \eqref{eq_neg_deriv} in Lemma \ref{lem_kl_bound}, 
\[
\left. \frac{\partial F_2\left( (1-\epsilon)\frac{O(Z')}{\mu_n} + \epsilon M, (1-\epsilon)\frac{N(Z')}{n^2}+ \epsilon t \right)}{\partial \epsilon} \right\vert_{\epsilon=0^+} < -\Omega_P(1) <0 
\]
for $(M, t)$ in the neighborhood of $(O(Z')/\mu_n, N(Z')/n^2)$. Hence
\begin{align}
& F_2\left( O(\bar{z})/\mu_n, N(\bar{z})/n^2 \right) - F_2\left( O(Z')/\mu_n, N(Z')/n^2\right) 	\notag\\
\leq & -\Omega_P(1)\frac{m}{n}.
\label{eq_m_bound}
\end{align}
We have 
\begin{align}
 & \sup_{\theta\in\Theta_{K-1}} \log f(z, A; \theta) - \sup_{\theta\in\Theta_{K-1}} \log f(Z', A; \theta)	\notag\\
\leq & n \left\vert \sum_{k=1}^{K-1} \alpha(n_k(\bar{z})/n) - \alpha(n_k(Z')/n) \right\vert 	\notag\\
 & \quad + n^2\left( F_1(O(\bar{z})/\mu_n, N(z)/n^2) - F_1(O(Z')/\mu_n, N(Z')/n^2) \right)		\notag\\
 \leq &\left( O(n) + o_P(\mu_n) - \Omega_P(\mu_n) \right) \frac{m}{n}	\notag\\
 = & -\Omega_P(\mu_n)\frac{m}{n}	\notag\\
 \label{eq_part2}
\end{align}
using \eqref{eq_part1_fixed} and \eqref{eq_m_bound}. We can conclude 
\begin{align}
 & \sum_{z\notin J_{\delta_n}, z\neq\tau(Z')} \sup_{\theta\in\Theta_{K-1}} e^{\log f(z, A; \theta)}	\notag\\
\leq  & \sup_{\theta\in\Theta_{K-1}} f(Z', A; \theta) \sum_{m=1}^{n} (K-1)^{K-1}n^m (K-1)^m e^{-\Omega_P(\mu_n) m/n}		\notag\\
= & \sup_{\theta\in\Theta_{K-1}} f(Z', A; \theta) o_P(1)
 \label{eq_part2_small}
\end{align}
The bounds \eqref{eq_part1_small} and \eqref{eq_part2_small} yield \eqref{eq_label_bound}. The case for $\rho_n=\Omega(1)$ can be shown in a similar way.
\end{proof}

Now Theorem \ref{thm_underfit} follows by Taylor expansion.
\begin{proof}[Proof of Theorem \ref{thm_underfit}]
First note that 
\begin{align}
L_{K,K-1} & = \log \frac{\sup_{\theta\in\Theta_{K-1}} g(A;\theta)}{g(A;\theta^*)} - \log \frac{\sup_{\theta\in\Theta_K} g(A;\theta)}{g(A;\theta^*)} \notag \\
 & = \sup_{\theta\in\Theta_{K-1}} \log \left[ \frac{g(A;\theta)}{f(Z,A;\theta^*)} \cdot \frac{f(Z,A;\theta^*)}{g(A;\theta^*)} \right] + O_P(1)	\notag\\
 & = \sup_{\theta\in\Theta_{K-1}} \log \frac{g(A;\theta)}{f(Z,A;\theta^*)} + O_P(1)
\end{align}
by a consequence of Theorem 1 and Lemma 3 in \cite{bickel:2013}. Noting that $\sup_{\theta\in\Theta_{K-1}} f(Z', A; \theta)$ is uniquely maximized at (omitting the argument $Z$)
 \begin{align}
 & \hat{\pi}_a = \frac{n_a}{n} = \pi^*_{a} + O_P(n^{-1/2}) \text{ for } 1\leq a\leq K-2,\quad \hat{\pi}_{K-1} = \frac{n_{K-1}+n_K}{n} \notag\\
  & \hat{H}_{a,b} = \frac{O_{a,b}}{n_{a,b}} = H^*_{a,b} + O_P(\sqrt{\rho_n}n^{-1}) \text{ for } 1\leq a \leq b \leq K-2, \notag \\
  & \hat{H}_{a,K-1} = \frac{O_{a,K-1}+O_{a,K}}{n_{a,K-1}+n_{a,K}} = H'_{a,K-1} + O_P(\rho_n n^{-1/2})\text{ for } 1\leq a \leq K-2, \notag\\
  & \hat{H}_{K-1,K-1} = \frac{\sum_{a=K-1}^K \sum_{b=a}^{K} O_{a,b}}{\sum_{a=K-1}^K \sum_{b=a}^{K} n_{a,b}} = H'_{K-1,K-1} + O_P(\rho_n n^{-1/2}),
\end{align}
and Assumption \ref{assump_iden} the merged $S'$ is identifiable, we have \[
\frac{\sup_{\theta\in\Theta_{K-1}}\sum_{z\in\S(Z')}f(z,A;\theta)}{\sup_{\theta\in\Theta_{K-1}}f(Z',A;\theta)} = 1+o_P(1).
\]
Combined with Lemma \ref{lem_bounds} 
\begin{align}
 & \sup_{\theta\in\Theta_{K-1}} \log\frac{g(A;\theta)}{f(Z,A;\theta^*)} \notag\\
 = & \sup_{\theta\in\Theta_{K-1}} \log\frac{f(Z',A;\theta)}{f(Z,A;\theta^*)} + o_P(1). \notag \\
\end{align}

We will check the expansion for the case $\rho_n\to0$; the case $\rho_n=\Omega(1)$ can be shown in the same way.  
\begin{align}
& n^{-3/2}\rho_n^{-1} \sup_{\theta\in\Omega_{K-1}} \log\frac{g(A;\theta)}{f(Z,A;\theta^*)} \notag\\
= & n^{-3/2}\rho_n^{-1} \sup_{\theta\in\Omega_{K-1}} \log\frac{f(Z', A;\theta)}{f(Z, A;\theta^*)} + o_P(1)	\notag\\
= & n^{-3/2}\rho_n^{-1} \left\{ n \sum_{a=1}^{K-1} \alpha(\hat{\pi}_{a})+  \frac{1}{2}\sum_{a=1}^{K-1}\sum_{b=1}^{K-1} \left( O_{a,b}\log\frac{\hat{H}_{a,b}}{1-\hat{H}_{a,b}} + n_{a,b} \log(1-\hat{H}_{a,b})\right)  \right.	 \notag \\
  & \left.\quad -\sum_{a=1}^{K} n_a \log \pi_a^* - \frac{1}{2}\sum_{a=1}^{K}\sum_{b=1}^{K} \left( O_{a,b}\log\frac{H^*_{a,b}}{1-H^*_{a,b}} + n_{a,b} \log(1-H^*_{a,b})\right)  \right\} + o_P(1)	\notag\\
= & n^{-1/2}\rho_n^{-1} \left[ \alpha(\pi^*_{K-1} + \pi^*_{K}) - \alpha(\pi^*_{K-1}) -\alpha(\pi^*_{K}) \right]		\notag\\
 & \quad n^{-3/2}\rho_n^{-1} \frac{1}{2}\sum_{(a,b)\in\mathcal{I}}  \left( O_{a,b} \log \frac{H'_{u(a), u(b)}(1-H^*_{a,b})}{(1-H'_{u(a),u(b)})H^*_{a,b}} + n_{a,b} \log \frac{1-H'_{u(a),u(b)}}{1-H^*_{a,b}} \right) + o_P(1),		 \notag\\
 	 \notag\\
\end{align}
where $\mathcal{I}$ be the set of indices affected by the merge, 
\[
\mathcal{I} = \{ (a,b)\in[K]^2 \mid K-1 \leq a \leq K \text{ or } K-1 \leq b \leq K \}.
\]
It is easy to see the expectation of this term is $\rho_n^{-1} \sqrt{n} \mu_2$, we have
\begin{align}
 & n^{-3/2}\rho_n^{-1} \sup_{\theta\in\Omega_{K-1}} \log\frac{g(A;\theta)}{f(Z,A;\theta^*)} - \sqrt{n}\rho_n^{-1} \mu_2 \notag\\
= & \frac{1}{2n^{3/2} \rho_n}\sum_{(a,b)\in\mathcal{I}} \left[ (O_{a,b} - n^2H^*_{a,b}\pi_a^*\pi_b^*) \log \frac{H'_{u(a),u(b)}(1-H^*_{a,b})}{(1-H'_{u(a),u(b)})H^*_{a,b}} 	\right.	\notag\\
& \left. + (n_{a,b} - n^2\pi_a^*\pi_b^*) \log \frac{1-H'_{u(a),u(b)}}{1-H^*_{a,b}} \right]  + o_P(1)	 \notag \\
 = & \frac{1}{2n^{3/2} \rho_n}\sum_{(a,b)\in\mathcal{I}} (n_{a,b} - n^2\pi_a^*\pi_b^*) d_1(a,b) + o_P(1),	\notag\\
 =  & \frac{\sqrt{n}}{2}\sum_{(a,b)\in\mathcal{I}} (n_{a,b}/n^2 - \pi_a^*\pi_b^*) d_2(a,b) + o_P(1)
% & + n^{-3/2}\rho_n^{-1} \cdot \frac{1}{2}\sum_{a=K-1}^{K}\sum_{b=K-1}^{K} \left\{ (n_{a,b} - \E(n_{a,b})) \left[ H^*_{a,b} \log \frac{H'_{K-1,K-1}(1-H^*_{a,b})}{(1-H'_{K-1,K-1})H^*_{a,b}} + \log \frac{1-H'_{K-1,K-1}}{1-H^*_{a,b}} \right] \right\} \notag\\
\end{align}
where $d_i(a,b)$ is defined in \eqref{eq_def_d}. \eqref{eq_asy_gaussian} follows by the delta method.

\end{proof}

\begin{proof}[Proof of Lemma \ref{lem_overfit_split}]
The proof follows using arguments similar to Lemma \ref{lem_bounds}. Note that in this case $G_1(R(z), H^*)$ is maximized at any $z \in\sV_{K^+}$ with the value $\sum_{a,b} p_a p_b \gamma_1 (H^*_{a,b})$ (or  $\sum_{a,b} p_a p_b \gamma_2 (S^*_{a,b})$ for $G_2(R(z), S^*)$). 

It suffices to discuss the case $\rho_n \to 0$. Denote the optimal $G^* := \sum_{a,b} p_a p_b \gamma_2 (S^*_{a,b})$, define similarly to Lemma \ref{lem_bounds}
\[
J_{\delta_n} = \{ z\in[K^+]^n : G_2(R(z), S^*) - G^* < -\delta_n \}
\]
for $\delta_n \to 0$ slowly enough. It is easy to see \[
\sum_{z\in J_{\delta_n}} \sup_{\theta\in\Theta_{K^+}} f(z, A; \theta) \leq \sup_{\theta\in\Theta_{K^+}} f(z_0, A;\theta) o_P(1)
\]
for any $z_0 \in \sV_{K^+}$. 

Next note that treating $R(z)$ as a vector, $\{R(z) \mid z\in\sV_{K^+}\}$ is a subset of the union of some of the $K^+-K$ faces of the polyhedron $P_R$. For every $z \notin J_{\delta_n}, z\notin \sV_{K^+}$, let $z_{\perp}$ be such that $R(z_{\perp}) := \min_{R(z_0): z_0\in\sV_{K^+}} \Vert R(z)-R(z_0) \Vert_2$. $R(z)-R(z_{\perp})$ is perpendicular to the corresponding $K^*-K$ face. Furthermore, this orthogonality implies the directional derivative of $G_2(\cdot, S^*)$ along the direction of $R(z)-R(z_{\perp})$ is bounded away from 0. That is
\[
\left. \frac{\partial G_2\left( (1-\epsilon)R(z_{\perp}) + \epsilon R(z), S^* \right) }{\partial \epsilon} \right\vert_{\epsilon = 0^+} < -C
\]
for some universal positive constant $C$. Similar to \eqref{eq_part2},
\begin{align*}
\sup_{\theta\in\Theta_{K^+}} \log f(z, A; \theta) - \sup_{\theta\in\Theta_{K^+}} \log f(z_{\perp}, A; \theta) & \leq -\Omega_P(\mu_n) \frac{m}{n}		\\
\sup_{\theta\in\Theta_{K^+}} f(z, A; \theta) & \leq e^{-\Omega_P(\mu_n) \frac{m}{n}} \sup_{\theta\in\Theta_{K^+}} f(z_{\perp}, A; \theta)
\end{align*}
where $|z - z_{\perp}| = m$. We have
\begin{align*}
& \sum_{z \notin J_{\delta_n}, z\notin \sV_{K^+}} \sup_{\theta\in\Theta_{K^+}} f(z, A; \theta)	\\
\leq & \sum_{z \in \sV_{K^+}}  \sup_{\theta\in\Theta_{K^+}} f(z, A; \theta) \sum_{m=1}^{n} (K-1)^m n^m e^{-\Omega_P(\mu_n) \frac{m}{n}}		\\
= & o_P(1)\sum_{z \in \sV_{K^+}}  \sup_{\theta\in\Theta_{K^+}} f(z, A; \theta).
\end{align*}
Hence the claim follows. 
\end{proof}

\begin{proof}[Proof of Theorem \ref{thm_overfit}]
First note 
\[
L_{K,K^+} = \log \frac{\sup_{\theta\in\Theta_{K^+}} g(A;\theta)}{f(Z, A; \theta^*)} +O_P(1),
\]
where 
\begin{align}
\log \frac{\sup_{\theta\in\Theta_{K^+}} g(A;\theta)}{f(Z, A; \theta^*)} &  \geq \log \frac{\sup_{\theta\in\Theta_{K^+}} f(Z, A;\theta)}{f(Z, A; \theta^*)} 	\notag\\
& =  O_P(1).	
\end{align}

Let $D(\cdot)$ be a diagonal matrix, upper bounding by the maximum,
\begin{align}
& \log \frac{\sup_{\theta\in\Theta_{K^+}} g(A;\theta)}{f(Z, A; \theta^*)}		\notag\\
\leq & \max_{z}\sup_{\theta\in\Theta_{K^+}} \log \frac{f(z,A;\theta)}{f(Z, A; \theta^*)} +	n\log K^+ 		\notag\\
= & \max_{z} \frac{n^2}{2} \left\{ F_1\left( O(z)/n^2, N(z)/n^2\right) - F_1 \left(D(p)H^*D(p), pp^T\right) \right\} + O_P(n)		\notag\\
\leq & \max_{z} \frac{n^2}{2} \left\vert F_1\left( O(z)/n^2, N(z)/n^2 \right)  - F_1(RH^*R^T(z), R\1\1^TR^T(z)) \right\vert		\notag\\
 & \quad  +  \max_{z} \frac{n^2}{2} \left[ F_1(RH^*R^T(z), R\1\1^TR^T(z)) - F_1 \left(D(p)H^*D(p), pp^T\right) \right]  +O_P(n)		\notag\\
\leq & C\mu_n \max_{z}\left\Vert \frac{O(z)}{\mu_n} - RS^*R^T\right\Vert_{\infty} + O_P(n)	\notag\\
= & O_P(n^{3/2}\rho_n^{1/2})
\end{align}
using \eqref{eq_bound1} in Lemma \ref{lem_concentration}, and the fact that 
\[
\max_{z\in[K^+]^{n}} F_1(RH^*R^T(z), R\1\1^TR^T(z)) = F_1 \left(D(p)H^*D(p), pp^T\right).
\]
\end{proof}

Next we prove Theorem \ref{thm_overfit_refine}.
\begin{proof}[Proof of Theorem \ref{thm_overfit_refine}]
To upper bound $L_{K, K^+}$, by Lemma \ref{lem_overfit_split}, it suffices to consider 
\begin{align*}
& \max_{z\in\sV_{K^+}} \sup_{\theta\in\Theta_{K^+}} \log f(z,A;\theta) - \sup_{\theta\in\Theta_{K}} \log g(A; \theta)	+ O(n)	\\
= & \max_{z\in\sV_{K^+}} \sup_{\theta\in\Theta_{K^+}} \log f(z,A;\theta) - \log f(Z, A; \theta^*) + O(n).
\end{align*}
It follows from the definition of $\sV_{K^+}$ there exists a surjective function $h: [K^+] \to [K]$ describing the block assignments in $R(z, Z)$. We have
\begin{align}
& \max_{z\in\sV_{K^+}} \sup_{\theta\in\Theta_{K^+}} \log f(z,A;\theta) - \log f(Z, A; \theta^*) + O(n)		\notag\\
= & \max_{z\in\sV_{K^+}}  n \sum_{k=1}^{K^+} \alpha \left( n_k(z)/n \right) - n \sum_{a=1}^{K} \alpha\left( \sum_{k\in h^{-1}(a)} n_k(z)/n \right)	\notag\\
& + \frac{1}{2}\sum_{k=1}^{K^+}\sum_{l=1}^{K^+}  \left( O_{k,l} \log\frac{\hat{H}_{k,l}}{H^*_{h(k), h(l)}}  + (n_{k,l} - O_{k,l}) \log \frac{1-\hat{H}_{k,l}}{1-H^*_{h(k), h(l)}} \right) + O_P(n),		
\label{eq_overfit_expansion}
\end{align}
where $\hat{H}_{k,l} = O_{k,l}(z) / n_{k,l}(z)$. The first part of the expression is nonpositive since $\alpha$ is superadditive.  

Taylor expanding \eqref{eq_overfit_expansion} and using the fact that $\hat{H}_{k,l} - H^*_{h(k), h(l)} = O_P(n^{-1/2}\rho_n^{1/2})$ for $z\in\sN_{K^+}$ uniformly, \eqref{eq_overfit_expansion} is upper bounded by $O_P(n)$.
%\begin{align}
%& \frac{1}{4}\sum_{k,l} n_{k,l} \frac{(\hat{H}_{k,l} - H^*_{h(k), h(l)})^2}{H^*_{h(k), h(l)}} + O_P(n) 	= O_P(n). 
%\label{eq_unif_bound}
%\end{align}
\end{proof}

\section*{Acknowledgments}

The authors thank Haiyan Huang for helpful discussions and the reviewers for their valuable comments and suggestions. This research was funded in part by NSF FRG Grant DMS-1160319.

\begin{supplement}
%\sname{Supplement}\label{supp}
\stitle{Supplement to ``Likelihood-based model selection for stochastic block models''} \slink[url]{http://???} 
\sdescription{A proof sketch of how the main results in the paper can be extended to the DCSBM described in Section \ref{subsec_dcsbm} is provided in the supplement.}
\end{supplement}
%%%%%%%%%%%%%%%%%%%%%%%%%%%%%%%%%%%%%%%%%%%%%%%%%%%%%%%%
%%%%%%%%%%%%%%%%%%%%%%%%%%%%%%%%%%%%%%%%%%%%%%%%%%%%%%%%
%%%%%%%%%%%%%%%%%%%%%%%%%%%%%%%%%%%%%%%%%%%%%%%%%%%%%%%%

%\clearpage
%\newpage
\bibliographystyle{imsart-nameyear}
\bibliography{ref}

\includepdf[pages=-]{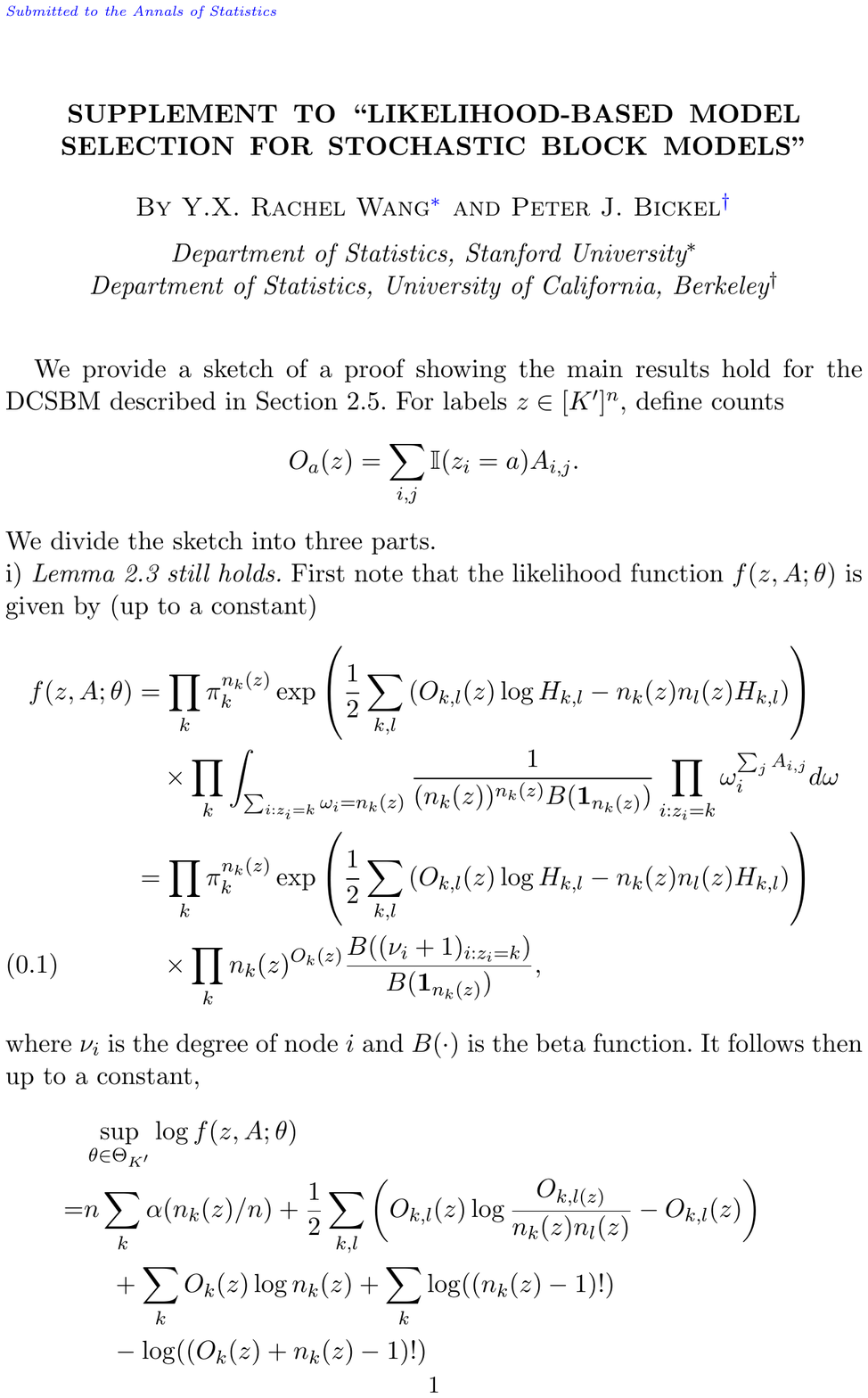}
\end{document}